\renewenvironment{proof}{\noindent{\it Proof.}}{\hfill$\square$}
 \definecolor{lightblue}{rgb}{0.22,0.45,0.70}
 \definecolor{lightgreen}{rgb}{0.22,0.50,0.25}
\numberwithin{equation}{section}
\numberwithin{figure}{section}
\numberwithin{table}{section}
\numberwithin{lemma}{section}
\numberwithin{corollary}{section}
\numberwithin{theorem}{section}
\numberwithin{remark}{section}
\DeclareSymbolFontAlphabet{\amsmathbb}{AMSb}
\definecolor{newgreen}{rgb}{0,0.5,0}
\definecolor{lightblue}{rgb}{0.22,0.45,0.70}
\newcommand\cero{\boldsymbol{0}}
\newcommand{\norm}[1]{\left\|#1\right\|}
\newcommand\vdiv{\mathop{\mathrm{div}}\nolimits}
\newcommand\bcurl{\mathop{\mathbf{curl}}\nolimits}
\newcommand\bdiv{\mathop{\mathbf{div}}\nolimits}
\newcommand\tr{\mathop{\mathrm{tr}}\nolimits}
\newcommand{\beps}{\boldsymbol{\varepsilon}}
\newcommand{\bS}{\mathbf{S}}
\newcommand{\bbS}{\amsmathbb{S}}
\newcommand{\bI}{\mathbf{I}}
\newcommand{\bu}{\boldsymbol{u}}
\newcommand{\bn}{\boldsymbol{n}}
\newcommand{\bt}{\boldsymbol{t}}
\newcommand{\bv}{\boldsymbol{v}}
\newcommand{\bw}{\boldsymbol{w}}
\newcommand{\bx}{\boldsymbol{x}}
\newcommand{\br}{\mathbf{r}}
\newcommand{\ff}{\boldsymbol{f}}
\renewcommand{\gg}{\boldsymbol{g}}
\newcommand{\bzeta}{\boldsymbol{\zeta}}
\newcommand{\bnabla}{\boldsymbol{\nabla}}
\newcommand{\bPi}{\boldsymbol{\Pi}}
\newcommand{\vdelta}{\vec{\delta}}
\newcommand\dt{{\,\mathrm{d}t\,}}
\newcommand\dx{{\,\mathrm{d}\bx\,}}
\newcommand\ds{{\,\mathrm{d}s\,}}
\newcommand\cA{\mathcal{A}}
\newcommand\cC{\mathcal{C}}
\newcommand\cT{\mathcal{T}}
\newcommand\cE{\mathcal{E}}
\newcommand\bP{\mathbf{P}}
\newcommand\bV{\mathbf{V}}
\newcommand\PP{\mathrm{P}}
\newcommand\bbH{\amsmathbb{H}}
\newcommand\bbP{\amsmathbb{P}}
\newcommand\bH{\mathbf{H}}
\newcommand\bL{\mathbf{L}}
\newcommand\rL{\mathrm{L}}
\newcommand\rW{\mathrm{W}}
\newcommand\bbL{\amsmathbb{L}}
\newcommand\bbK{\amsmathbb{K}}
\newcommand\bbPi{\overline{\boldsymbol{\Pi}}}
\newcommand\bsigma{\boldsymbol{\sigma}}
\newcommand\btau{\boldsymbol{\tau}}
\newcommand\bgamma{\boldsymbol{\gamma}}
\newcommand\eeta{\boldsymbol{\eta}}
\newcommand{\pspr}[1]{\|#1\|_{\rL^\infty(0,T;\rL^2(\Omega))}}
\newcommand\mathcalbb[2][2]{%
  \stackengine{0pt}{$\mathcal{#2}$}{$\mkern#1mu\mathcal{#2}$}{O}{l}{F}{F}{L}}
\newcommand{\psgs}{\vec{X}}
\begin{document}
\authorrunning{Kumar, Rajput, Ruiz-Baier}
\titlerunning{Virtual element methods for four-field viscoelasticity}

\title{Mixed virtual element methods for a stress-velocity-rotation formulation in viscoelasticity}

\author{Sarvesh Kumar \and Utkarsh Rajput \and Ricardo Ruiz-Baier}
\institute{Sarvesh Kumar \and Utkarsh Rajput
\at Department of Mathematics,
Indian Institute of Space
Science and Technology, Thiruvananthapuram 695 547, Kerala, India. \\
\email{sarvesh@iist.ac.in}, \email{utkarshrajput.19@res.iist.ac.in}.
\and
 Ricardo Ruiz-Baier \at
 School of Mathematics, Monash University, 9 Rainforest Walk, Melbourne, VIC 3800, Australia; and
  Universidad Adventista de Chile, Casilla 7-D, Chill\'an, Chile. \\
  \email{Ricardo.RuizBaier@monash.edu}.}

\date{\today}
\maketitle

\begin{abstract}
In this paper we propose a new mixed virtual element formulation for the numerical approximation of viscoelasticity equations with weakly imposed stress symmetry. The governing equations {use the Zener model and are expressed in terms of the principal unknowns of additively decomposed stress into elastic and internal viscoelastic contributions, while the rotation tensor and velocity act as Lagrange multipliers}. The time discretisation uses Crank--Nicolson's scheme. We demonstrate the unique solvability of both semi-discrete and fully-discrete problems by leveraging the properties of suitable local projectors. Moreover, we establish optimal a priori error estimates for all variables that appear in the mixed formulation. To validate our theoretical findings, we present several representative numerical examples that also highlight the features of the proposed formulation.
\end{abstract}

\keywords{Viscoelasticity problem \and Mixed virtual element methods \and  Stress-based formulations \and Semi-discrete and fully-discrete error estimates.}

\subclass{65M30 \and 65M15 \and 74D05 \and 76R50}

\section{Introduction}
\subsection*{Scope}
Viscoelasticity models are used to explain the macroscopic behaviour of a number of realistic materials which, in addition to exhibiting an elastic strain-stress relationship in the stationary state, show a dissipation of the kinetic energy in the dynamic state. Viscoelastic materials have various applications in industry and   medical sectors. For instance,  in  structural engineering, viscoelastic materials are crucial for modelling the behaviour of polymer, in  biomechanics, understanding viscoelasticity is essential for analysing the response of muscles and in materials science, viscoelastic behaviour plays a vital role in the performance of composites and damping materials. In the one-dimensional setting, the viscoelastic behaviour can be represented in a rheological manner by a setup of springs and dashpots in serial or parallel. For the specific case of the standard linear model, the rheology has a parallel coupling of one Maxwell component and one spring. Several comprehensive studies and investigation of applications of viscoelastic materials can be found in  \cite{borcherdt09,gurtin62,karamanou05,tschoegl89}.  Considering wide applications of viscoelasticity, one needs to seek efficient  numerical techniques that would provide desirable numerical solution of the viscoelastic model problems. Moreover, numerical approximations of linear viscoelasticity play a crucial role in understanding and predicting the behaviour of viscoelastic materials under various loading conditions.  For the numerical simulation and analysis of a variety of formulations for the viscoelasticity problem we refer to, e.g.,   \cite{becache05,fernandez09,maurelli14,riviere03,riviere07,rognes10,shaw94,shaw00} as well as the references therein.

In this paper, we  explore the employability of virtual element methods (VEMs) -- a relatively recent numerical technique for solving partial differential equations, evolved from 0-Cochains mimetic finite difference and $H^1$-conforming finite element methods. VEMs have the flexibility of handling distorted elements (with very high aspect ratio), hanging nodes, polytopal meshes with non-convex shapes. One of the attractive features of the VEM is that bilinear forms can be directly computed  using degrees of freedom defined for the local virtual spaces, without requiring the explicit construction of basis functions for the discrete spaces used in the approximation. The virtual spaces contain polynomial and non-polynomial functions, and non-polynomial functions can be computed with the help of local projection operators that are defined from the virtual space to polynomial spaces. Considering the computational advantages of VEMs,  these methods have been employed to wide range of problems in mechanics and fluid dynamics.  The literature of VEMs for structural mechanics is very abundant, see for example primal, hybrid, discontinuous, adaptive and other variants in \cite{antonietti21,artioli17,artioli20,beirao13,beirao15,brezzi12,choi21,dhanush19,gain14,tang20,wriggers17} and the references therein. In particular, VEMs for linear elasticity with weakly symmetric stress have been recently advanced in \cite{zhang18} and later extended to elastodynamics in \cite{berbatov21,zhang19}. Also, a mixed formulation using pseudo-stress and displacement is found in \cite{caceres19}. In addition, formulations based on the Hellinger--Reissner variational principle imposing strong symmetry can be found in  \cite{artioli17b,artioli20b,dassi20}, {and also including the 3D case in \cite{visinoni24}.}

Mixed finite element formulations are often preferred over standard primary formulations for the simultaneous numerical approximation of displacement and pressure variables, which arise in various mathematical models in engineering and physics. {One advantage is their natural robustness with respect to material parameters (and in particular producing solutions that are locking-free, that is, even in regimes of large mechanical parameters the compliance tensor is coercive with a uniformly bounded coercivity constant)}. Building on the concepts of mixed finite element methods and the flexibility of polygonal meshes, the authors in \cite{brezzi13,beirao16} introduced mixed VEMs and these methods have been subsequently applied to more realistic problems.  We here focus on the construction and analysis of mixed VEM for viscoelasticity in its total stress (additively formed from purely elastic stress and internal viscoelastic stress), velocity, and rotation tensor formulation. The FEM counterpart of such formulations has been studied in  \cite{lee17,rognes10} {using mixed schemes for Maxwell, Kelvin--Voigt, and standard linear solid models with either quasi-static equilibrium equation or fully hyperbolic systems (see also \cite{gatica21,marquez21} for slightly different formulations splitting the stress into Maxwell and fully elastic components, eliminating the displacement, and using Newmark trapezoidal rule, \cite{meddahi23} for DG schemes tailored for a modified Zener model, and \cite{wang24} for a  multipoint stress mixed method that leads to reduced algebraic systems)}. Meshes with polygons with arbitrary number of sides can be easily handled using VEM and such meshes are suitable for some particular applications like the simulation of epithelial cells which are inherently polygons. For further applications of virtual elements in viscoelasticity-related computations, see \cite{xiao23,pradhan23,artioli16,artioli,jin24}.

The primary goal of this paper is to introduce a virtual element discretisation combined with a Crank--Nicolson scheme and evaluate its computational performance for the standard linear solid model of viscoelasticity. The virtual element spaces we employ for the stress variables are the $H(\mathrm{div})$-conforming spaces  introduced in \cite{beirao16}. We introduce a projection operator for the variables in the weakly symmetric formulation to facilitate the analysis and demonstrate its approximation properties.

\subsection*{Outline}
The remainder of this work is organised as follows. The precise form of the governing equations in primal and mixed form is presented in Section~\ref{sec:model}, including the corresponding variational formulations. The definition of the proposed VEM in mixed form is given in Section~\ref{sec:vem}, also addressing the properties of the bilinear forms, the definition of the virtual spaces, and specification of degrees of freedom. There we also show the well-posedness of the semi-discrete (continuous in time) problem. Next, Section~\ref{sec:error} contains the derivation of a priori error estimates in the $L^2$-norm, Section~\ref{kvbk} contains the fully-discretised problem using Crank--Nicolson's method, along with error estimates, and we conclude in Section~\ref{sec:results} with some simple illustrative two-dimensional numerical results on polygonal meshes.

\section{Mixed elastodynamics and viscoelasticity equations}\label{sec:model}
\subsection{Preliminaries}{ Let $\Omega\subset\mathbb{R}^2$ be a polygonal domain}, occupied by a linearly viscoelastic body. The domain boundary $\partial\Omega$ is partitioned into disjoint sub-boundaries where homogeneous displacement and traction-type boundary conditions are imposed  $\partial\Omega:= \overline{\Gamma^{\bu}} \cup \overline{\Gamma^{\bsigma}}$, and it is assumed for sake of simplicity that both sub-boundaries are non-empty $|\Gamma^{\bu}|\cdot|\Gamma^{\bsigma}|>0$. Throughout the text, given a normed space $S$, by $\bS$ and $\bbS$ we will denote the vector and tensor extensions $S^d$ and $S^{d\times d}$, respectively. Next, we recall the definition of the tensor-valued Hilbert spaces $\bbH(\bdiv,\Omega)=\left\{\btau \in \bbL^2(\Omega): \bdiv\btau \in \bL^2(\Omega) \right\}$, $\bbH(\bcurl,\Omega)=\left\{\btau \in \bbL^2(\Omega): \bcurl\btau \in \bbL^2(\Omega) \right\}$, with their usual norms   $\norm{\btau}_{\bdiv,\Omega}^2:=\norm{\btau}^2_{0,\Omega}+\norm{\bdiv\btau}^2_{0,\Omega}$,   $\norm{\btau}_{\bcurl,\Omega}^2:=\norm{\btau}^2_{0,\Omega}+\norm{\bcurl\btau}^2_{0,\Omega}$,  where the divergence acts on the rows of $\btau$, and the curl of a tensor is here understood as the tensor formed by the curl of the rows of $\btau$. We also define the  following tensor spaces
\begin{equation}\label{eq:Hstar}
  \bbH_\star(\bdiv,\Omega) := \{\btau\in \bbH(\bdiv,\Omega): \btau\bn = \cero \text{ on } \Gamma^{\bsigma}\}, \quad
  \bbL^2_{\mathrm{skew}}(\Omega) := \{\eeta \in \bbL^2(\Omega): \eeta = - \eeta^{\tt t}\},\end{equation}
where {$\bn$ is a unit outward normal to the boundary and} $(\bullet)^{\tt t}$ denotes the matrix transpose. Next we recall the notation regarding function spaces defined on a bounded time interval $[0,{T}]$ and with values in a separable Hilbert space $S$, with norm $\norm{\bullet}_S$. For a nonnegative integer $m$, and for $1\leq p< \infty$, we denote by $\rL^{p}(S)$ and $\rW^{m,p}(S)$ the spaces of classes of functions $f:[0,{T}]\rightarrow S$ for which 
\[\norm{f(t)}_{{\rL^p_{[0,T]}(S)}}^p:=\int^{{T}}_{0}\,\norm{f}_{S}^p\dt<\infty\quad\text{and}\quad\norm{f(t)}_{{\rW^{m,p}_{[0,T]}(S)}}^p:=\sum_{l=0}^{m}\norm{\partial^lu/\partial t^l}^p_{{\rL^p_{[0,T]}(S)}}<\infty.\]
We also use the space $C^m([0,{T}],S)$ of $m-$times continuously differentiable functions. For brevity we write $\dot{f}$ and $\ddot{f}$ to denote $\partial f/\partial t$ and $\partial^2f/\partial t^2$, respectively. Finally,  {for $1 \leq p \leq \infty$ } we consider the Sobolev space
\[
W^{1, p}(0,T; V):= \left\{f: \exists g\in L^{{p}}([0,T],V)
\ \text{and}\ \exists f_0\in V\ \text{such that}\
 f(t) = f_0 + \int_0^t g(s)\, \text{d}s\ \forall t\in [0,T]\right\},
\]
and define the space $W^{k, {p}}(0,T;V)$ recursively  for all $k\in\mathbb{N}$.

\subsection{Model problems}
For a sufficiently smooth load $\ff(t):\Omega\to\mathbb{R}^2$  we start with  the elastodynamic equations with weakly symmetric stress imposition. They consist in finding, for each time $t\in (0,{T}]$, the Cauchy stress tensor, the displacement vector, and the rotation tensor $\bsigma(t),\bu(t),\br(t)$ such that
\begin{subequations}\label{eq:strong-elast}
\begin{align}
 \cA_1\, \bsigma & = \beps(\bu) = \bnabla\bu-\br & \quad \text{in $\Omega\times (0,{T}]$}, \\
\varrho\ddot{\bu} -\bdiv\bsigma & =\varrho \ff  & \quad \text{in $\Omega\times (0,{T}]$},\\
 \bsigma & = \bsigma^{\tt t} & \quad \text{in $\Omega\times (0,{T}]$}, \\
 \bu & = \cero & \quad \text{on $\Gamma^{\bu}\times (0,{T}]$},\\
\bsigma\bn & = \cero  & \quad \text{on $\Gamma^{\bsigma}\times (0,{T}]$},\end{align}\end{subequations}
(stating the constitutive relation, the balance of linear momentum, the balance of angular momentum, and the mixed-loading boundary conditions of homogeneous type, respectively) where $\beps(\bu)= \frac12(\bnabla \bu+(\bnabla\bu)^{\tt t})$ is the infinitesimal strain tensor, $\br = \frac12(\bnabla \bu-(\bnabla\bu)^{\tt t})$ is the tensor of rotations, and $\varrho$ is the mass density of the solid.

The material properties are described at each point by the compliance tensor (the inverse of the fourth-order  linear isotropic stiffness tensor $\cC_1$) $\cA_1$, which is identified as a symmetric, bounded, and uniformly positive definite linear operator characterised by its action
\begin{equation}\label{eq:A}
\cC_1\beps(\bu) = 2\mu_1\beps(\bu) + \lambda_1(\vdiv\bu)\bI,\qquad \cA_1\,\bsigma = \frac{1}{2\mu_1}\biggl(\bsigma - \frac{\lambda_1}{2\mu_1+d\lambda_1} \tr(\bsigma)\bI\biggr),\end{equation}
where $\lambda_1,\mu_1$ are the Lam\'e parameters of Hooke's law. A   stress-velocity  formulation  might present  analysis-related advantages  over  more  classical  stress-displacement  approaches, including  a more str\-aight\-forward treatment of  dynamic  effects \cite{maurelli14}. Compatibility  relations  are  written in terms of strain and  velocity $\bv(t) := \dot{\bu}(t)$ in problem \eqref{eq:strong-elast}. Using the boundary conditions and the definition of the function spaces in \eqref{eq:Hstar}, it is possible to write  the following weak formulation: find $(\bsigma,\bv, \bgamma)\in C^0([0,{T}],\bbH_\star(\bdiv,\Omega))\,\cap\,C^1([0,{T}],\bbL^2(\Omega))\times C^1([0,{T}],\bL^2(\Omega))\times C^1([0,{T}],\bbL_{\mathrm{skew}}^2(\Omega))$ such that
\begin{align*}
\nonumber
(\cA_1\dot{\bsigma},\btau)_{0,\Omega}+(\bdiv\,\btau, \bv)_{0,\Omega}+(\dot{\bgamma},\btau)_{0,\Omega}&=0 & \quad\,\forall\; \btau \in \bbH_\star(\bdiv,\Omega),\\
(\varrho\dot{\bv},\bw)_{0,\Omega}-(\bdiv \bsigma, \bw)_{0,\Omega}&=(\varrho\ff, \bw)_{0,\Omega} & \quad \forall \;\bw \in \bL^2(\Omega),\\
(\dot{\bsigma},\eeta)_{0,\Omega}&=0 & \quad\;\forall\,\eeta \in \bbL_{\mathrm{skew}}^2(\Omega),\nonumber
\end{align*}
with given initial data
\[(\bsigma(0),\bv(0),\bgamma(0))=( \cC_1\beps(\bu(0)),\bv(0),\frac12(\bnabla\bu(0) - (\bnabla\bu(0))^{\tt t}).\]
If $\ff\in W^{1,1}([0,{T}],\bL^2(\Omega))$ then this problem is well-posed, as established in \cite[Theorem 3.1]{arnold14} (see also \cite{garcia17}).

As in \cite{lee17} we now extend \eqref{eq:strong-elast} to  a standard linear solid (or Zener) rheological model for viscoelastic materials (a parallel phenomenological model considering the coupling between one Maxwell component conformed by one spring and one dashpot in serial, and one spring). Apart from the compliance tensor $\cA_1$ (cf. \eqref{eq:A}) associated with the second spring unit, let $\cA_0,\cA_0'$ denote the compliance tensors of the spring and dashpot components in the Maxwell compartment (see Figure~\ref{fig:sketch}). For the sake of simplicity we assume that  $\cA_0,\cA_0'$  are also of Hooke's type but associated with different Lam\'e pairs $(\mu_0,\lambda_0)$ and $(\mu_0',\lambda_0')$, respectively.

\begin{figure}[t!]
\begin{center}
\includegraphics[width=0.4\textwidth]{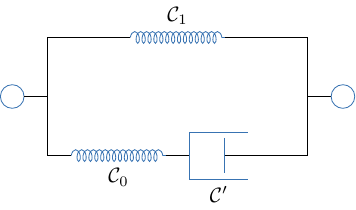}
\end{center}
\caption{Schematic representation of the rheology for the classical Zener (or standard linear solid) model in viscoelasticity. $\cC_0,\cC_1$  are the elasticity tensors associated with the first and second spring units, respectively, while $\cC'$ is the tensor associated with the dashpot.}\label{fig:sketch}
\end{figure}

Then, the governing equations are written in terms of the total Cauchy stress tensor $\bsigma_0(t)+\bsigma_1(t)$, the velocity vector $\bv(t)$, and the rotation tensor $\br(t)$ as follows
\begin{subequations}\label{eq:strong-viscoelast}
\begin{align}
 \cA_0\, \dot{\bsigma}_0 + \cA_0'\bsigma_0  = \cA_1\dot{\bsigma}_1 & = \beps(\bv) = \bnabla\bv-\dot\br & \quad \text{in $\Omega\times (0,{T}]$}, \\
\varrho\dot{\bv} -\bdiv(\bsigma_0+\bsigma_1) & = \varrho \ff & \quad \text{in $\Omega\times (0,{T}]$},\\
 \bsigma_0 + \bsigma_1 & = (\bsigma_0+\bsigma_1)^{\tt t} & \quad \text{in $\Omega\times (0,{T}]$}, \\
 \bv & = \cero & \quad \text{on $\Gamma^{\bu}\times (0,{T}]$},\\
(\bsigma_0+\bsigma_1)\bn & = \cero  & \quad \text{on $\Gamma^{\bsigma}\times (0,{T}]$},\end{align}\end{subequations}
and they are equipped with the following initial data $\bv(0) = \cero$, $\bsigma_0(0)=\bsigma_{00}{\in \bbH_\star(\bdiv,\Omega)}$, $\bsigma_1(0)=\bsigma_{10}{\in \bbH_\star(\bdiv,\Omega)}$. In weak form, this problem {reads:} 
find $(\bsigma_0,\bsigma_1,\bv,\br):(0,{T}] \to \bbH_\star(\bdiv,\Omega)\times \bbH_\star(\bdiv,\Omega) \times \bL^2(\Omega)\times \bbL^2_{\mathrm{skew}}(\Omega)$  such that
\begin{subequations}\label{eq:weak-viscoelast}
\begin{align}
(\cA_0\, \dot{\bsigma}_0 + \cA_0'\bsigma_0,\btau_0)_{0,\Omega} + (\bdiv\btau_0,\bv)_{0,\Omega} + (\dot \br,\btau_0)_{0,\Omega} & = 0 & \quad \forall \btau_0\in  \bbH_\star(\bdiv,\Omega),\\
 (\cA_1\bsigma_1,\btau_1)_{0,\Omega} + (\bdiv\btau_1,\bv)_{0,\Omega} + (\dot\br,\btau_1)_{0,\Omega} & = 0 & \quad \forall \btau_1\in  \bbH_\star(\bdiv,\Omega),\\
 (\varrho\dot{\bv},\bw)_{0,\Omega} -(\bdiv(\bsigma_0+\bsigma_1),\bw)_{0,\Omega} & = (\varrho\ff,\bw)_{0,\Omega} & \quad \forall \bw \in \bL^2(\Omega),\\
 (\dot{\bsigma}_0+\dot{\bsigma}_1,\eeta)_{0,\Omega} & = 0 & \quad \forall \eeta\in \bbL^2_{\mathrm{skew}}(\Omega).
\end{align}
\end{subequations}
The existence of a unique solution to \eqref{eq:weak-viscoelast} (in the case of pure velocity boundary conditions and for $\ff\in L^1([0,{T}],\bL^2(\Omega)$) has been proven in \cite{lee17} using the theory of semigroups.

\section{Virtual element discretisation}\label{sec:vem}
\subsection{Preliminaries, virtual spaces and degrees of freedom}\label{pbk}
Let $\cT_h$ denote a family of polygonal  meshes on $\Omega$ (from now on, assumed a polygonal domain) and denote by $\cE_h$ the set of all edges in the mesh. By $h_K$ we denote the diameter of the polygon  $K$ and by $h_F$ we denote the length  of the edge $F$. As usual, by $h$ we denote the maximum of the diameters of elements in $\cT_h$. For all meshes we make the following assumptions.
\begin{itemize}

  \item There exists a uniform positive constant $\eta_1$ such that each element $K$ is star-shaped with respect to a ball of radius greater than $\eta_1 h_K$.

      \item  There exists $\eta_2>0$ such that for each element and every edge $F\in \partial K$, we have that $h_F\geq \eta_2 h_K$.
      \end{itemize}
      Throughout the paper, given two quantities $a$ and $b$, we write $a \lesssim b$ when there exists a constant $C$, independent of the mesh-size $h$, such that $a \leq  C b$. By  $\PP_k (\Theta)$ we will denote the space of polynomials of total degree at most $k$ defined locally on the domain $\Theta$, and denote by $\bP_k(\Theta)$ and $\bbP_k(\Theta)$ the vector and tensor counterparts, respectively.

Consider an arbitrary polynomial degree $k\geq 1$. Following \cite{caceres19,zhang18,zhang19}, for each $K\in\cT_h$ we introduce the local virtual space
\begin{align}
\label{eq:local-space}
\bbS(K)&:=\{\btau \in \bbH(\bdiv,K) \cap \bbH(\bcurl,K): \nonumber \\
& \qquad \qquad  \btau\bn|_F\in \bP_k(F) \ \forall F\in\partial K, \quad  \bdiv\btau|_K \in \bP_k(K), \quad  \bcurl\btau|_K \in \bbP_{k-1}(K)\},\end{align}
where by convention, $\bbP_{-1}(K)=\{\cero\}$, { Let $\alpha$ be a nonnegative integer. For an edge $F$ with midpoint $\bx_F$, we define  the set of $\alpha+1$  normalised monomials as
$$\mathcal{M}_{\alpha}^{F}:=\left\{\left(\frac{\bx-\bx_F}{h_F}\right)^\beta:0\leq \beta\leq \alpha\right\}.$$ 
Similarly, for a two dimensional  domain $K$ with barycentre $\bx_K$, we define the set of $(\alpha+1)(\alpha+2)/2 $ normalised monomials as $$\mathcal{M}_{\alpha}^{K}:=\left\{\left(\frac{\bx-\bx_K}{h_K}\right)^\beta:| \beta| \leq \alpha\right\},$$ where, as usual for a nonnegative multi-index $\beta=(\beta_1,\beta_2)$, we set $|\beta|=\beta_1+\beta_2$ and $\bx^{\beta}=x_{1}^{\beta_1}x_2^{\beta_2}$. 

Next we define the following  unisolvent set of local degrees of freedom describing functions in $\bbS(K)$ is given by the so-called $K$-moments (see, e.g., \cite{caceres19}, or also  \cite{beirao16} for the case of mixed Poisson)

\begin{itemize}
\item[] $\displaystyle {\frac{1}{|F|}} \int_F \btau\bn \cdot \bw_k\ds$ for all edges $F\in \partial K$, and for all $\bw_k \in {\mathcal{M}_{k}^{F}}$,
\item[] $\displaystyle {\frac{1}{|K|}} \int_K \btau :  \gg_{k-1}\dx$ for all $\gg_{k-1} \in \mathcalbb{G}_{k-1}(K)$,
\item[]  $\displaystyle {\frac{1}{|K|}} \int_K \btau :  \gg^\perp_{k}\dx$ for all $\gg_{k}^\perp \in \mathcalbb{G}^\perp_{k}(K)$,
\end{itemize}
where we denote by $\mathcalbb{G}_k(K):=\bnabla \bP_{k+1}(K)$ the local space of gradients of vector polynomials of degree up to $k+1$, and $\mathcalbb{G}_k^\perp(K)$ denotes its orthogonal (with respect to the $\bbL^2(K)$-norm) in the tensor polynomial space $\bbP_k(K)$.}
{
\begin{remark}
    We observe that by choosing $\gg_{k-1} \in \mathcalbb{G}_{k-1}(K)$ and  $\gg_{k}^\perp \in \mathcalbb{G}^\perp_{k}(K)$ such that they belong to $\mathcal{M}_{k-1}^{K}$, all the degrees of freedom  of $\bbS(K)$ scale as 1.
\end{remark}}

The local spaces  \eqref{eq:local-space} can be patched together to construct the global virtual space for stresses
\begin{equation*}
\bbS_h : = \{ \btau \in \bbS:=\bbH_\star(\bdiv,\Omega): \btau|_K \in \bbS(K),\ \forall K\in \cT_h\},\end{equation*}
(containing the imposition of the traction boundary condition in an essential manner), while for velocity and rotation we consider the discrete spaces
\begin{align*}
\bV_h &:=\{ \bv\in \bV:=\bL^2(\Omega): \bv|_K \in \bP_k(K),\  \forall K\in \cT_h\}, \\
\bbK_h &:=\{ \eeta\in \bbK:=\bbL_{\mathrm{skew}}^2(\Omega): \eeta|_K \in \bbP_k(K),\  \forall K\in \cT_h\}, 
\end{align*}
respectively.

Since the stresses belong to a  virtual space which contains non-polynomial functions, the stress-stress bilinear forms cannot be computed explicitly. Therefore, we introduce appropriate projection operators that allow us to define fully computable discrete variational formulations. By $\Pi_k^0$ we denote the usual $\rL^2(\Omega)\to \PP_k(\cT_h)$ orthogonal projection and $\bPi_k^0,\bbPi_k^0$ will denote the   extensions to the vector and tensor cases.  By $\bbPi_k^i$ we will denote the interpolation operator from the space
\[\bbH^1_h(\Omega): = \{ \btau \in \bbH(\bdiv,\Omega): \btau|_K \in \bbH^1(K) \ \forall K\in \cT_h\},\]
 into $\bbS_h$, which, in view of the $K$-moments above, is characterised by the following identities (see, e.g., \cite{beirao16})
\begin{align*}
0 & = \int_F (\btau - \bbPi_k^i\btau)\bn \cdot \bw_k \ds & \forall \bw_k\in \bP_k(F),\\
0 & = \int_K(\btau - \bbPi_k^i\btau):\gg_{k-1} \dx & \forall \gg_{k-1} \in \mathcalbb{G}_{k-1}(K),\\
0& = \int_K (\btau - \bbPi_k^i\btau): \gg^\perp_{k}\dx & \forall \gg_{k}^\perp \in \mathcalbb{G}^\perp_{k}(K).
\end{align*}

\begin{figure}
	\centering
	\begin{tikzpicture}[scale=3.2]
		\draw (0,0)--(1,0)--(1,1)--(0,1)--(0,0);
		\fill[blue] (0.55,0.55) circle (0.02);
		\fill[blue] (0.5,0.55) circle (0.02);
		\fill[blue] (0.45,0.55) circle (0.02);
		\fill[blue] (0.45,0.5) circle (0.02);
		\fill[blue] (0.45,0.45) circle (0.02);
		\fill[blue] (0.5,0.45) circle (0.02);
		\fill[blue] (0.55,0.45) circle (0.02);
		\fill[blue] (0.55,0.5) circle (0.02);
		\draw[red,->] (0,0.5)--(-0.3,0.5);
		\draw[red,->] (0,0.4)--(-0.3,0.4);
		\draw[red,->] (0,0.6)--(-0.3,0.6);
		\draw[red,->] (1,0.5)--(1.3,0.5);
		\draw[red,->] (1,0.4)--(1.3,0.4);
		\draw[red,->] (1,0.6)--(1.3,0.6);
		\draw[red,->] (0.5,0)--(0.5,-0.3);
		\draw[red,->] (0.4,0)--(0.4,-0.3);
		\draw[red,->] (0.6,0)--(0.6,-0.3);
		\draw[red,->] (0.5,1)--(0.5,1.3);
		\draw[red,->] (0.4,1)--(0.4,1.3);
		\draw[red,->] (0.6,1)--(0.6,1.3);
	\end{tikzpicture}
	\caption{Schematics of the local degrees of freedom for each row of the stress variable, focusing on polynomial degree  $k=2$.}
\end{figure}
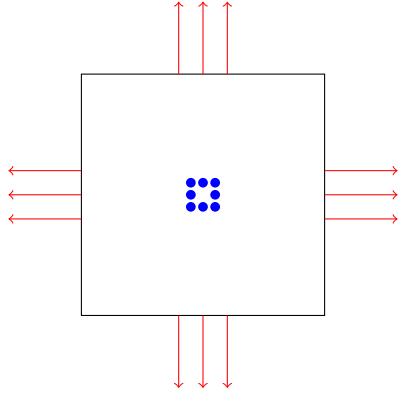

These properties imply the commutativity relation
	\begin{equation}\label{eq:pp}
	\bdiv(\bbPi_k^i\btau) = \bPi_k^0(\bdiv\btau).
	\end{equation}

Also, the spaces satisfy the following inf-sup condition, as proved in  \cite[Eq. (4.17)]{zhang18}.
	\begin{equation}\label{eq:infsup}
		\inf_{\bw_h\in\bV_h,\eeta_h\in\bbK_h} \sup_{\btau_{h} \in \bbS_h} \frac{(\bdiv \btau_h,\bw_h)_{0,\Omega}+(\btau_h,\eeta_h)_{0,\Omega}}{\|\btau_h\|_{\bdiv,\Omega}(\|\bw_h\|_{{0,\Omega}}+\|\eeta_h\|_{{0,\Omega}})}\ge C_I>0.
\end{equation}

\subsection{Semi-discrete formulation and its well-posedness}
The semi-discrete problem reads as follows: starting from the initial datum $(\bsigma_{0,h}(0),\bsigma_{1,h}(0),\bv_h(0), \br_h(0)) \in \bbS_h \times \bbS_h \times \bV_h \times \bbK_h$, find $(\bsigma_{0,h},\bsigma_{1,h},\bv_h,\br_h)\in C^1( [0,{T}],\bbS_h) \times C^1( [0,{T}],\bbS_h) \times C^1( [0,{T}],\bV_h)\times C^1( [0,{T}],\bbK_h)$ such that
\begin{subequations}\label{eq:semi-discrete}
\begin{align}
a_{0,h}(\dot{\bsigma}_{0,h}, \btau_{0,h}) + a_{0,h}'(\bsigma_{0,h}, \btau_{0,h})+ (\bdiv\btau_{0,h},\bv_h)_{0,\Omega} + (\btau_{0,h},\dot\br_h)_{0,\Omega} & = 0 & \forall \btau_{0,h} \in \bbS_h,\\
 a_{1,h}(\dot{\bsigma}_{1,h}, \btau_{1,h})  + (\bdiv\btau_{1,h},\bv_h)_{0,\Omega} + (\btau_{1,h},\dot\br_h)_{0,\Omega} & = 0 & \forall \btau_{1,h} \in \bbS_h,\\
	(\varrho \dot{\bv}_h,\bw_h)_{0,\Omega} - (\bdiv(\bsigma_{0,h}+\bsigma_{1,h}),\bw_h)_{0,\Omega} - (\varrho\ff,\bw_h)_{0,\Omega}& = 0 & \forall \bw_h \in \bV_h,\\
(\dot{\bsigma}_{0,h}+\dot{\bsigma}_{1,h}, \eeta_h)_{0,\Omega} & = 0 & \forall \eeta_h\in\bbK_h,
\end{align}\end{subequations}
where $a_{*,h}(\bsigma_h,\btau_h) = \sum_{K\in \cT_h} a_{*,h}^K(\bsigma_h,\btau_h)$, and
 thanks to the model assumptions adopted in the previous section, the local bilinear forms are defined as
\begin{align*}
a_{*,h}^K({\bsigma_{*,h}},\btau_{*,h}) & : = \frac{1}{2\mu^*}\int_K \bbPi_k^0 \bsigma_{*,h}: \bbPi_k^0 \btau_{*,h} \dx - \frac{\lambda^*}{2\mu^*+d\lambda^*}\int_K \tr(\bbPi_k^0 \bsigma_{*,h}) \, \tr( \bbPi_k^0 \btau_{*,h}) \dx \\
& \qquad +  S_h^K(\bsigma_{*,h} - \bbPi_k^0\bsigma_{*,h}, \btau_{*,h} - \bbPi_k^0\btau_{*,h}),
\end{align*}
where $*$ specifies which compartment of the rheological model and which parameter set the bilinear form is describing, and where $ S_h^K$ is defined as follows:
	$$S_h^K(\bsigma_{*,h},\btau_{*,h})={|K|}\sum_i \chi_i(\bsigma_{*,h})\chi_i(\btau_{*,h}),$$
	where $\chi_i$ are the degree of freedom operators introduced in Section \ref{pbk}. {Let $\boldsymbol{\chi}$ denote the set of all such degrees of freedom. Following the proof of \cite[Theorem 4.5]{chen}, it is straightforward to verify the following norm equivalence relation for any $\btau_h  \in \bbS(K)$:
    \begin{align}\label{newdof}
         C_1 h_K\|\boldsymbol{\chi (\btau_h})\|_{l^2}\leq \|\btau_h\|_{0,K} \leq C_2 h_K\|\boldsymbol{\chi (\btau_h})\|_{l^2},
    \end{align}
    where $C_1$ and $C_2$ are positive constants independent of $h_K$. }

	Clearly, $S_h^K$ is  positive definite and symmetric.{ Moreover, in  view of \eqref{newdof},  $S_h^K$ satisfies  the following scaling property (see also \cite{beirao16})}
\[ \alpha_1 \| \btau_{*,h} - \bbPi_k^0\btau_{*,h} \|^2_{0,K} \leq  S_h^K({\btau_{*,h} - \bbPi_k^0\btau_{*,h}}, \btau_{*,h} - \bbPi_k^0\btau_{*,h}) \leq \alpha_2 \| \btau_{*,h} - \bbPi_k^0\btau_{*,h} \|^2_{0,K} ,\]
with $\alpha_1,\alpha_2$ positive constants independent of $h$, of $K$, and of $\lambda^*$.
The bilinear forms $a_{*,h}^K$ thus defined satisfy the following properties:
	\begin{align*}
		a_{*,h}^K(\btau_h,\bsigma_h)&\le C \|\btau_h\|_{0,K}\|\bsigma_h\|_{0,K}& \;\;\;\;\forall \btau_h,\bsigma_h\in{\bbS_h},\\
		a_{*,h}^K(\btau_h,\btau_h)&\geq C \|\btau_h\|_{0,K}^2&\;\;\;\;\forall \btau_h\in{\bbS_h}.
	\end{align*}

The semi-discrete formulation \eqref{eq:semi-discrete} can be expressed as a system of linear ordinary differential equations in time, for which existence and uniqueness results are well established in the literature. Therefore, we do not provide a detailed proof, but instead,  below we offer a brief outline of the argument.

We proceed by writing the {semi-discrete} problem in terms of operators.
	Let $A_*:\bbS_h\to\bbS_h$ be defined by $A_*\bsigma_h=R_{\bbS_h}(a_{*,h}(\bsigma_h,\bullet))$ where $R_{\bbS_h}:\bbS_h'\to\bbS_h$ gives the Riesz-representation of the operand.
	Similarly, we define $H:\bbS_h\to\bbK_h$ as $H\bsigma_h=R_{\bbK_h}(\bsigma_h,\bullet)_{0,\Omega}$, $M:\bV_h\to\bV_h$ as $M \bu_h=\varrho \bu_h$, $J:\bbS_h\to\bV_h$ as $J\bsigma_h=R_{\bV_h}(\bdiv\bsigma_h,\bullet)_{0,\Omega}$ and $F=R_{\bV_h}(\varrho f,\bullet)$.
Finally, let us define the block operators
	\begin{align*}
	\mathscr{A}=\begin{bmatrix}
	A_0 & 0 & 0 & H^* \\
	0 & A_1 & 0 & H^* \\
	0 & 0 & M & 0 \\
	H & H & 0 & 0
	\end{bmatrix}, \qquad
	\mathscr{B}=\begin{bmatrix}
	-A_0' & 0 & -J^* & 0 \\
	0 & 0 & -J^* & 0 \\
	J & J & 0 & 0 \\
	0 & 0 & 0 & 0
	\end{bmatrix}, \qquad
	\mathscr{C}=\begin{bmatrix}
	0 \\ 0 \\ F \\ 0
	\end{bmatrix}.
	\end{align*}
With this, it readily follows that system \eqref{eq:semi-discrete} reads
	\begin{equation}\label{dgt}
		\mathscr{A}\dot{\vec{x}}_h=\mathscr{B}\vec{x}_h+\mathscr{C}, \qquad  \text{with} \quad \vec{x}_h=(\bsigma_{0,h},\bsigma_{1,h},\bv_h,\br_h)^{\tt t}.
	\end{equation}

{
\begin{lemma}
The  semi-discrete problem \eqref{dgt} possesses a unique solution.
\end{lemma}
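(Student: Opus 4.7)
The plan is to read \eqref{dgt} as a linear first-order ODE system on the finite-dimensional space $\bbS_h\times\bbS_h\times\bV_h\times\bbK_h$ and reduce it to explicit form $\dot{\vec{x}}_h = \mathscr{A}^{-1}\mathscr{B}\vec{x}_h + \mathscr{A}^{-1}\mathscr{C}$ by proving that the block operator $\mathscr{A}$ is invertible. Once that reduction is in place, the right-hand side is affine in $\vec{x}_h$ with a forcing that is integrable in $t$ (thanks to the regularity assumed on $\ff$), so the Picard--Lindel\"of theorem (or equivalently the variation-of-constants formula for linear systems) will yield a unique global solution in $C^1([0,T];\bbS_h\times\bbS_h\times\bV_h\times\bbK_h)$ matching the prescribed initial data.

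The crux is the invertibility of $\mathscr{A}$, which by finite-dimensionality reduces to injectivity. I would assume $\mathscr{A}\vec{y} = \cero$ with $\vec{y} = (\bsigma_0,\bsigma_1,\bv,\br)^{\tt t}$ and extract consequences block by block. The third block gives $M\bv = \cero$ and hence $\bv = \cero$, because $\varrho>0$. The first and second blocks then translate into
\[a_{0,h}(\bsigma_0,\btau_h) + (\btau_h,\br)_{0,\Omega} = 0, \qquad a_{1,h}(\bsigma_1,\btau_h) + (\btau_h,\br)_{0,\Omega} = 0 \qquad \forall\,\btau_h\in\bbS_h,\]
while the fourth block amounts to $(\bsigma_0+\bsigma_1,\eeta_h)_{0,\Omega} = 0$ for every $\eeta_h\in\bbK_h$. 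Testing the first identity with $\btau_h = \bsigma_0$, the second with $\btau_h = \bsigma_1$, adding them, and using the fourth identity with the admissible choice $\eeta_h = \br$ to kill the cross term, I obtain $a_{0,h}(\bsigma_0,\bsigma_0) + a_{1,h}(\bsigma_1,\bsigma_1) = 0$. The coercivity estimates recorded for $a_{*,h}^K$ just before the operator reformulation then force $\bsigma_0 = \bsigma_1 = \cero$, and substituting back reduces either stress equation to $(\btau_h,\br)_{0,\Omega} = 0$ for all $\btau_h\in\bbS_h$.

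The hard part is this last step: deducing $\br = \cero$ from the vanishing of $(\btau_h,\br)_{0,\Omega}$ tested only against stresses. This is where the discrete inf-sup condition \eqref{eq:infsup} is indispensable: applied with $\bw_h = \cero$ it yields $C_I\,\|\br\|_{0,\Omega} \le \sup_{\btau_h\in\bbS_h}(\btau_h,\br)_{0,\Omega}/\|\btau_h\|_{\bdiv,\Omega} = 0$, hence $\br = \cero$ and $\mathscr{A}$ is injective. With $\mathscr{A}$ therefore invertible, standard ODE theory on the finite-dimensional product space closes the argument and delivers the unique $C^1$-in-time solution.
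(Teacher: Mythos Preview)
Your argument is correct and uses the same core ingredients as the paper: finite-dimensional injectivity of $\mathscr{A}$ via the coercivity of the $a_{*,h}$ forms, the discrete inf-sup condition \eqref{eq:infsup} to eliminate $\br$, and then Picard--Lindel\"of on the resulting explicit ODE. The only difference is organisational. The paper argues structurally: since $A_0,A_1,M$ are symmetric positive definite, a block elimination shows that invertibility of $\mathscr{A}$ is equivalent to injectivity of $H^*$, and the latter is then read off directly from \eqref{eq:infsup}. You instead run a coupled energy argument on the kernel, testing the two stress rows with $\bsigma_0,\bsigma_1$ and using the fourth row with $\eeta_h=\br$ to cancel the cross term before invoking coercivity. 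Your route is slightly more hands-on but self-contained; the paper's route isolates the one nontrivial fact ($H^*$ injective) and avoids the energy computation altogether. Either way the inf-sup condition is the decisive input.
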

	\begin{proof}
First, from the fact that   {$a_{*,h}(\bullet,\bullet)$ and $(\bullet,\bullet)_{0,\Omega}$} are coercive and symmetric on $\bbS_h$ and $\bbK_h$ respectively, we get that $A_0$, $A_1$ and $M$ are positive definite.
 Thus,	to prove the invertibility of $\mathscr{A}$,  we only need to show that $H^*$ is injective.

Let us suppose that for some $\eeta_h\in\bbK_h$, $H^*\eeta_h=\cero$. Then, for any $\btau_h\in\bbS_h$, we have that 	
		\begin{align*}
			0=(\btau_h,H^*\eeta_h)_{0,\Omega}=(H\btau_h,\eeta_h)_{0,\Omega}=(\btau_h,\eeta_h)_{0,\Omega}.
		\end{align*}
		{Note that here we have only used $H^*\eeta_h=\cero$ and the properties of $H$ (and we are not requiring $\btau_h$ to be symmetric).}
		As this is true for all $\btau_h\in\bbS$, we readily get the following relation by using \eqref{eq:infsup}.
		$$
		0=\sup_{\btau_{h} \in \bbS_h} \frac{(\btau_h,\eeta_h)_{0,\Omega}}{\|\btau_h\|_{\bdiv,\Omega}}\ge C_I\|\eeta_h\|_{{0,\Omega}}.
		$$
		This is only possible if $\|\eeta_h\|_{{0,\Omega}}=0$, that is $\eeta_h=\cero$.
		As $\eeta_h$ was any generic element of $\bbK_h$ satisfying $H^*\eeta_h=\cero$, we get that $H^*$ is injective, thus proving the invertibility of $\mathscr{A}$.

Finally, we can write the system \eqref{dgt} as $\dot{\vec{x}}_h=\mathscr{F}(\vec{x}_h,t)$ where $\mathscr{F}$ is continuous in $t$ and Lipschitz-continuous in $\vec{x}_h$.
	Thus, by Cauchy--Picard's theorem (see, e.g., \cite[Chapter 2]{collins}), we will have a unique solution of the semi-discrete problem  in $[0,{T}]$.\end{proof}
	}

On the other hand, we also prove the following stability result.
\begin{theorem}
	The solution of the semi-discrete system \eqref{eq:semi-discrete} satisfies the following estimate 
	\begin{align*}
		& \max_{s\in[0,T]}(\|\bsigma_{0,h}(s)\|_{{0,\Omega}}+\|\bsigma_{1,h}(s)\|_{{0,\Omega}}+\|\bv_h(s)\|_{{0,\Omega}}) \\
		&\qquad  \lesssim \|\bsigma_{0,h}(0)\|_{{0,\Omega}}+\|\bsigma_{1,h}(0)\|_{{0,\Omega}}+\|\bv_h(0)\|_{{0,\Omega}} + \int_0^T \|\ff(s)\|_{{0,\Omega}} \ds.
	\end{align*}
\end{theorem}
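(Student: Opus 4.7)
My plan is to derive an energy identity by testing the four semi-discrete equations \eqref{eq:semi-discrete} with the solution itself: take $\btau_{0,h}=\bsigma_{0,h}$ in (a), $\btau_{1,h}=\bsigma_{1,h}$ in (b), $\bw_h=\bv_h$ in (c), and then sum. The couplings $(\bdiv\btau_{*,h},\bv_h)_{0,\Omega}$ cancel against $(\bdiv(\bsigma_{0,h}+\bsigma_{1,h}),\bv_h)_{0,\Omega}$. Using the symmetry of $a_{0,h}$, $a_{1,h}$ and of the $\varrho$-weighted $\rL^2$ inner product, each expression $a_{*,h}(\dot{\bsigma}_{*,h},\bsigma_{*,h})$ equals $\tfrac12\tfrac{d}{dt}a_{*,h}(\bsigma_{*,h},\bsigma_{*,h})$, and similarly for the mass term. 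The resulting identity is
\begin{align*}
& \tfrac12\tfrac{d}{dt}\bigl[a_{0,h}(\bsigma_{0,h},\bsigma_{0,h})+a_{1,h}(\bsigma_{1,h},\bsigma_{1,h})+(\varrho\bv_h,\bv_h)_{0,\Omega}\bigr]\\
& \qquad +\,a_{0,h}'(\bsigma_{0,h},\bsigma_{0,h})+(\bsigma_{0,h}+\bsigma_{1,h},\dot\br_h)_{0,\Omega}=(\varrho\ff,\bv_h)_{0,\Omega}.
\end{align*}

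\textbf{Handling the rotation coupling.} The delicate term is $(\bsigma_{0,h}+\bsigma_{1,h},\dot\br_h)_{0,\Omega}$, since $\dot\br_h(t)\in\bbK_h$ is not necessarily small. The idea is to exploit the fourth equation: fixing a time-independent $\eeta_h\in\bbK_h$ and integrating (d) from $0$ to $t$ yields
\[(\bsigma_{0,h}(t)+\bsigma_{1,h}(t),\eeta_h)_{0,\Omega}=(\bsigma_{0,h}(0)+\bsigma_{1,h}(0),\eeta_h)_{0,\Omega}\quad\forall\,\eeta_h\in\bbK_h.\]
Assuming, as is standard in the weakly symmetric setting, that the discrete initial stresses are chosen orthogonal to $\bbK_h$ (i.e.\ discretely symmetric, matching the continuous constraint), the right-hand side vanishes for every $t\in[0,T]$. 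Selecting then $\eeta_h=\dot\br_h(t)\in\bbK_h$ in this orthogonality makes the rotation contribution disappear from the energy identity.

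\textbf{Closing the argument.} After dropping the non-negative dissipation $a_{0,h}'(\bsigma_{0,h},\bsigma_{0,h})$ and integrating in time on $[0,t]$, the coercivity of $a_{0,h},a_{1,h}$ on $\bbS_h$ (established via the scaling property of $S_h^K$) together with the positivity of $\varrho$ gives
\[E(t):=\|\bsigma_{0,h}(t)\|_{0,\Omega}^2+\|\bsigma_{1,h}(t)\|_{0,\Omega}^2+\|\bv_h(t)\|_{0,\Omega}^2\lesssim E(0)+\int_0^t\|\ff(s)\|_{0,\Omega}\sqrt{E(s)}\,\ds,\]
after a Cauchy--Schwarz on the forcing term. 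A standard Gronwall/Bihari-type lemma for inequalities of the form $\phi^2\le a^2+2\!\int g\phi$ (differentiating $\sqrt{a^2+2\!\int g\phi}$) produces $\sqrt{E(t)}\lesssim\sqrt{E(0)}+\int_0^t\|\ff(s)\|_{0,\Omega}\,\ds$, and taking the maximum over $t\in[0,T]$ delivers the stated bound. I expect the rotation coupling to be the only genuinely subtle step: without the discrete weak symmetry of the initial data, integration by parts in time would leave an uncontrolled boundary contribution $(\bsigma_{0,h}(t)+\bsigma_{1,h}(t),\br_h(t))_{0,\Omega}$, so this assumption on the initial datum is essential for the argument.
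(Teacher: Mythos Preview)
Your proposal is correct and follows essentially the same route as the paper's proof: test with $(\bsigma_{0,h},\bsigma_{1,h},\bv_h)$, eliminate the rotation coupling by combining the fourth equation with the discrete weak symmetry of the initial stresses, drop the dissipative term $a_{0,h}'(\bsigma_{0,h},\bsigma_{0,h})$, and close with coercivity plus a Gronwall-type bound. The only cosmetic difference is in the final step: the paper bounds $\|\bv_h(s)\|_{0,\Omega}$ by $\max_{s\in[0,T]}(\|\bsigma_{0,h}\|+\|\bsigma_{1,h}\|+\|\bv_h\|)$ and divides through, whereas you invoke the Bihari-type inequality $\phi^2\le a^2+2\int g\phi\Rightarrow \phi\le a+\int g$; both are standard and yield the same estimate.
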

\begin{proof}
	Substituting $\btau_{0,h}=\bsigma_{0,h}$, $\btau_{1,h}=\bsigma_{1,h}$, $\bw_{h}=\bv_{h}$ and {$\eeta_{h}=\br_{h}$} in the first three equations of \eqref{eq:semi-discrete} and adding them all, we get the following.
	$$
	\frac{1}{2}\frac{\text{d}}{\text{d}t}(a_{0,h}(\bsigma_{0,h},\bsigma_{0,h})+a_{1,h}(\bsigma_{1,h},\bsigma_{1,h})+(\varrho \bv_h,\bv_h))
	+a_{0,h}'(\bsigma_{0,h},\bsigma_{0,h})=(\varrho \ff,\bv_h)-(\bsigma_{0,h}+\bsigma_{1,h},\dot\br_h).
	$$
	The term $(\bsigma_{0,h}+\bsigma_{1,h},\dot\br_h)$ is zero owing to both $(\bsigma_{0,h}(0)+\bsigma_{1,h}(0),\eeta_h)$ and $(\dot\bsigma_{0,h}+\dot\bsigma_{1,h},\eeta_h)$ being zero for all $\eeta_h\in\bbK_h$. Thus, we are left with the following relation
	\begin{equation}\label{eq:gtks}
	\frac{1}{2}\frac{\text{d}}{\text{d}t}(a_{0,h}(\bsigma_{0,h},\bsigma_{0,h})+a_{1,h}(\bsigma_{1,h},\bsigma_{1,h})+(\varrho \bv_h,\bv_h))
+a_{0,h}'(\bsigma_{0,h},\bsigma_{0,h})=(\varrho \ff,\bv_h).
	\end{equation}
Next we apply the {coercivity} of the bilinear forms and Cauchy--Schwarz's inequality to get the following bound
	\begin{equation}\label{eq:stnq}
	\frac{\text{d}}{\text{d}t}\left(\|\bsigma_{0,h}\|_{{0,\Omega}}^2+\|\bsigma_{1,h}\|_{{0,\Omega}}^2+\|\bv_h\|_{{0,\Omega}}^2\right) +\|\bsigma_{0,h}\|_{{0,\Omega}}^2\le C \|\ff\|_{{0,\Omega}}\|\bv_h\|_{{0,\Omega}}.
	\end{equation}
Then, discarding the positive term $\|\bsigma_{0,h}\|_{{0,\Omega}}^2$ and integrating in time from $0$ to $t$ for $t<T$, we readily get the estimate
	\begin{align*}
		& \|\bsigma_{0,h}(t)\|_{{0,\Omega}}^2+\|\bsigma_{1,h}(t)\|_{{0,\Omega}}^2+\|\bv_h(t)\|_{{0,\Omega}}^2 \\
		& \le  \|\bsigma_{0,h}(0)\|_{{0,\Omega}}^2+\|\bsigma_{1,h}(0)\|_{{0,\Omega}}^2+\|\bv_h(0)\|_{{0,\Omega}}^2 + C \int_0^t \|\ff(s)\|_{{0,\Omega}}\|\bv_h(s)\|_{{0,\Omega}} \ds.
	\end{align*}
Further, using  standard inequalities, it is possible to obtain
	\begin{align*}
		& (\|\bsigma_{0,h}(t)\|_{{0,\Omega}}+\|\bsigma_{1,h}(t)\|_{{0,\Omega}}+\|\bv_h(t)\|_{{0,\Omega}})^2 \\
		& \le C \left((\|\bsigma_{0,h}(0)\|_{{0,\Omega}}+\|\bsigma_{1,h}(0)\|_{{0,\Omega}}+\|\bv_h(0)\|_{{0,\Omega}})^2 + \int_0^T \|\ff(s)\|_{{0,\Omega}}\|\bv_h(s)\|_{{0,\Omega}} \text{d}s\right) \\
		& \le C \left((\|\bsigma_{0,h}(0)\|_{{0,\Omega}}+\|\bsigma_{1,h}(0)\|_{{0,\Omega}}+\|\bv_h(0)\|_{{0,\Omega}})^2 + \max_{s\in[0,T]}\|\bv_h(s)\|_{{0,\Omega}}\int_0^T \|\ff(s)\|_{{0,\Omega}} \text{d}s\right) \\
		& \le C \bigl((\|\bsigma_{0,h}(0)\|_{{0,\Omega}}+\|\bsigma_{1,h}(0)\|_{{0,\Omega}}+\|\bv_h(0)\|_{{0,\Omega}})^2 \\
		&\qquad + \max_{s\in[0,T]}(\|\bsigma_{0,h}(s)\|_{{0,\Omega}}+\|\bsigma_{1,h}(s)\|_{{0,\Omega}}+\|\bv_h(s)\|_{{0,\Omega}})\int_0^T \|\ff(s)\|_{{0,\Omega}} \text{d}s\bigr).
	\end{align*}
And finally, dividing both sides by $\max_{s\in[0,T]}(\|\bsigma_{0,h}(s)\|_{{0,\Omega}}+\|\bsigma_{1,h}(s)\|_{{0,\Omega}}+\|\bv_h(s)\|_{{0,\Omega}})$, we end up with
	\begin{align*}
		& \max_{s\in[0,T]}(\|\bsigma_{0,h}(s)\|_{{0,\Omega}}+\|\bsigma_{1,h}(s)\|_{{0,\Omega}}+\|\bv_h(s)\|_{{0,\Omega}}) \\
		& \le C \biggl(\|\bsigma_{0,h}(0)\|_{{0,\Omega}}+\|\bsigma_{1,h}(0)\|_{{0,\Omega}}+\|\bv_h(0)\|_{{0,\Omega}} + \int_0^T \|\ff(s)\|_{{0,\Omega}} \text{d}s\biggr),
	\end{align*}
	which finishes the proof. 
\end{proof}

\begin{remark} Equation \eqref{eq:gtks} shows the dissipation of the total energy consisting of the kinetic and elastic potential energy for the viscoelastic material. Specifically, when the load term is zero, it shows that the energy decays to zero with time. \end{remark}

\section{Error analysis}\label{sec:error}
In this section we aim at the derivation of optimal  {\it a priori} error estimates for the solutions of the semi-discrete problem \eqref{eq:semi-discrete} in their natural norms.
	
	To that end,  we first define a weakly symmetric elliptic projection operator $\widetilde{\Pi}:\bbS\times\bbS\times\bV\times\bbK \to \bbS_h\times\bbS_h\times\bV_h\times\bbK_h$ which maps $(\bsigma_{0},\bsigma_{1},\bu,\br)$ to $(\bsigma_{0,\pi},\bsigma_{1,\pi},\bu_\pi,\br_\pi)$ where the latter is the solution of the following equations
\begin{subequations}\label{eq:projection}
\begin{align}
({\bsigma}_{0,\pi}-\bsigma_0,\btau_{0,h})_{0,\Omega} + (\bdiv\btau_{0,h},\bu_\pi-\bu)_{0,\Omega} + (\btau_{0,h},\br_\pi-\br)_{0,\Omega} & = 0 & \forall \btau_{0,h} \in \bbS_h,\\
({\bsigma}_{1,\pi}-\bsigma_1,\btau_{1,h})_{0,\Omega} + (\bdiv\btau_{1,h},\bu_\pi-\bu)_{0,\Omega} + (\btau_{1,h},\br_\pi-\br)_{0,\Omega} & = 0 & \forall \btau_{1,h} \in \bbS_h,\\
(\bdiv(\bsigma_{0,\pi}+\bsigma_{1,\pi}-\bsigma_0-\bsigma_1),\bw_h)_{0,\Omega} & = 0 & \forall \bw_h \in \bV_h,\\
({\bsigma}_{0,\pi}+{\bsigma}_{1,\pi}-{\bsigma}_0-{\bsigma}_1, \eeta_h)_{0,\Omega} & = 0 & \forall \eeta_h\in\bbK_h.
\end{align}
\end{subequations}
The operator $\widetilde{\Pi}$ is well-defined due to the following lemma.

\begin{lemma}
	The problem \eqref{eq:projection} is well posed.
\end{lemma}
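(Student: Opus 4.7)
The plan is to exploit the fact that \eqref{eq:projection} is a square linear system posed on the finite-dimensional space $\bbS_h \times \bbS_h \times \bV_h \times \bbK_h$, so that well-posedness reduces to proving that the associated homogeneous problem (obtained by setting $\bsigma_0 = \bsigma_1 = \cero$, $\bu = \cero$, $\br = \cero$ on the data side) admits only the trivial solution. Existence will then follow automatically from uniqueness.

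The first step is to test the two stress equations with $\btau_{0,h} = \bsigma_{0,\pi}$ and $\btau_{1,h} = \bsigma_{1,\pi}$, the divergence equation with $\bw_h = \bu_\pi$, and the skew-constraint equation with $\eeta_h = \br_\pi$, and then add the four resulting identities. Summing the first two produces $\|\bsigma_{0,\pi}\|_{0,\Omega}^2 + \|\bsigma_{1,\pi}\|_{0,\Omega}^2$ together with the cross terms $(\bdiv(\bsigma_{0,\pi}+\bsigma_{1,\pi}),\bu_\pi)_{0,\Omega}$ and $(\bsigma_{0,\pi}+\bsigma_{1,\pi},\br_\pi)_{0,\Omega}$, which are precisely what the third and fourth equations annihilate. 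One therefore concludes $\bsigma_{0,\pi} = \bsigma_{1,\pi} = \cero$ directly from an $\bL^2$-energy argument, without needing any coercivity beyond the identity.

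Plugging these back into the first two homogeneous equations leaves
\[ (\bdiv\btau_{0,h},\bu_\pi)_{0,\Omega} + (\btau_{0,h},\br_\pi)_{0,\Omega} = 0 \qquad \forall \, \btau_{0,h} \in \bbS_h, \]
so the numerator in the discrete inf-sup condition \eqref{eq:infsup} vanishes uniformly in $\btau_{0,h}$. Consequently $\|\bu_\pi\|_{0,\Omega} + \|\br_\pi\|_{0,\Omega} = 0$, which gives the desired uniqueness.

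I do not anticipate a genuine technical obstacle here: the argument is essentially the standard stability proof for a two-block Brezzi saddle point with the velocity and rotation serving as common Lagrange multipliers for both stress compartments. The only point that requires care is that the cancellation of the cross terms occurs only for the sum $\bsigma_{0,\pi}+\bsigma_{1,\pi}$ rather than compartment by compartment, so one must first add the two stress identities before invoking the constraint equations; everything else is a mechanical application of the pre-established inf-sup condition \eqref{eq:infsup}.
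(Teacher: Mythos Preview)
Your argument is correct and in fact slightly more self-contained than the paper's. The paper also reduces to the homogeneous system, but then proceeds differently: it subtracts the two stress equations (with the common test function $\btau_{0,h}$) to conclude first that $\bsigma_{0,\pi}=\bsigma_{1,\pi}$, substitutes this back to obtain a single-stress saddle-point system, and finally invokes an external well-posedness result \cite[Theorem~4.1]{zhang18} for that reduced system. Your route instead adds the two stress equations tested with $\bsigma_{0,\pi}$ and $\bsigma_{1,\pi}$ and uses the constraint equations to cancel the cross terms, yielding $\bsigma_{0,\pi}=\bsigma_{1,\pi}=\cero$ in one stroke; the vanishing of $(\bu_\pi,\br_\pi)$ then follows directly from the discrete inf-sup condition~\eqref{eq:infsup} already recorded in the paper. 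The advantage of your version is that it is entirely internal to the present manuscript and does not require unpacking what \cite{zhang18} establishes; the paper's reduction, on the other hand, makes explicit the structural fact that the four-field projection collapses to the standard three-field elasticity projection once the two stress compartments are identified.
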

\begin{proof}
	We want to show that given any $(\bsigma_{0},\bsigma_{1},\bu,\br) \in \bbS\times\bbS\times\bV\times\bbK$, there exists a unique solution to \eqref{eq:projection}. As $\bbS_h\times\bbS_h\times\bV_h\times\bbK_h$ is finite dimensional, the square system of linear equations has a unique solution if
the following system of equations possesses only the zero solution.
\begin{align*}
({\bsigma}_{0,\pi}, \btau_{0,h})_{0,\Omega}+ (\bdiv\btau_{0,h},\bu_\pi)_{0,\Omega} + (\btau_{0,h},\br_\pi)_{0,\Omega} & = 0 & \forall \btau_{0,h} \in \bbS_h,\\
({\bsigma}_{1,\pi}, \btau_{1,h})_{0,\Omega}+ (\bdiv\btau_{1,h},\bu_\pi)_{0,\Omega} + (\btau_{1,h},\br_\pi)_{0,\Omega} & = 0 & \forall \btau_{1,h} \in \bbS_h,\\
	(\bdiv(\bsigma_{0,\pi}+\bsigma_{1,\pi}),\bw_h)_{0,\Omega} & = 0 & \forall \bw_h \in \bV_h,\\
	({\bsigma}_{0,\pi}+{\bsigma}_{1,\pi}, \eeta_h)_{0,\Omega} & = 0 & \forall \eeta_h\in\bbK_h.
\end{align*}
	Substituting  $ \btau_{1,h}=\btau_{0,h}$ in the second equation and then taking the difference of the first two equations, we get that $({\bsigma}_{0,\pi}-\bsigma_{1,\pi}, \btau_{0,h})_{0,\Omega}=0 \;\;\forall \btau_{0,h} \in \bbS_h$, implying that $\bsigma_{0,\pi}=\bsigma_{1,\pi}$.
	Now, substituting $\bsigma_{1,\pi}$ by $\bsigma_{0,\pi}$ in the equations and dividing by two, we get the following system of equations.
\begin{align*}
({\bsigma}_{0,\pi}, \btau_{0,h})_{0,\Omega}+ (\bdiv\btau_{0,h},\bu_\pi)_{0,\Omega} + (\btau_{0,h},\br_\pi)_{0,\Omega} & = 0 & \forall \btau_{0,h} \in \bbS_h,\\
	(\bdiv(\bsigma_{0,\pi}),\bw_h)_{0,\Omega} & = 0 & \forall \bw_h \in \bV_h,\\
	({\bsigma}_{0,\pi}, \eeta_h)_{0,\Omega} & = 0 & \forall \eeta_h\in\bbK_h.
\end{align*}
	This is a well-posed system by \cite[Theorem 4.1]{zhang18}, hence, the kernel of the above system is zero.
\end{proof}

Next, we establish the projection error estimates, which will be used to prove the optimal error estimates between the exact solution and the solution of the semi-discrete problem \eqref{eq:semi-discrete}.
\begin{lemma}
	\label{ppel}
	Let $(\bsigma_{0},\bsigma_{1},\bu,\br) \in \amsmathbb{W}^{k+1,2}(\Omega) \times\amsmathbb{W}^{k+1,2}(\Omega)\times \mathbf{W}^{k+1,2}(\Omega)\times \amsmathbb{W}^{k+1,2}(\Omega)  $  and its projection be $(\bsigma_{0,\pi},\bsigma_{1,\pi},\bu_\pi,\br_\pi)=\widetilde{\Pi}(\bsigma_{0},\bsigma_{1},\bu,\br)$. Then, there exists $C>0$ independent of $h$ such that
	$$\|\bu_\pi-\bu\|_{{0,\Omega}}+\|\br_\pi-\br\|_{{0,\Omega}}+\|{\bsigma}_{0,\pi}-\bsigma_0\|+\|{\bsigma}_{1,\pi}-\bsigma_1\|\le Ch^{k+1}\|\bsigma_0,\bsigma_1,\bu,\br\|_{W^{k+1,2}(\Omega)}.$$
\end{lemma}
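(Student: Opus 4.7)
The natural strategy is to compare $(\bsigma_{0,\pi},\bsigma_{1,\pi},\bu_\pi,\br_\pi)$ with carefully chosen interpolants and exploit the algebraic structure of \eqref{eq:projection}. Specifically, I would set $\bsigma_{*,I} := \bbPi_k^i \bsigma_*$ for $* = 0,1$, and $\bu_I := \bPi_k^0 \bu$, $\br_I := \bbPi_k^0 \br$, which enjoy the standard VEM approximation estimates $\|\bsigma_*-\bsigma_{*,I}\|_{0,\Omega} + \|\bdiv(\bsigma_*-\bsigma_{*,I})\|_{0,\Omega} + \|\bu-\bu_I\|_{0,\Omega} + \|\br-\br_I\|_{0,\Omega} \lesssim h^{k+1}(\|\bsigma_0,\bsigma_1,\bu,\br\|_{k+1,\Omega})$, where for the divergence one uses the commutativity \eqref{eq:pp}. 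Then I would split $\bsigma_{*,\pi}-\bsigma_* = \be_{\sigma_*} + (\bsigma_{*,I}-\bsigma_*)$ with $\be_{\sigma_*} := \bsigma_{*,\pi}-\bsigma_{*,I}$, and analogously for $\be_u, \be_r$; by the triangle inequality it suffices to bound the discrete errors $(\be_{\sigma_0},\be_{\sigma_1},\be_u,\be_r)$.

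Substituting into \eqref{eq:projection} yields a system whose right-hand sides are rewritten using two key orthogonalities. First, $(\bdiv\btau_{*,h},\bu_I-\bu)_{0,\Omega}=0$ since $\bdiv\btau_{*,h}$ is piecewise polynomial of degree $k$ and $\bu_I = \bPi_k^0\bu$. Second, the interior $K$-moments defining $\bbPi_k^i$ (moments against $\mathcalbb{G}_{k-1}\oplus\mathcalbb{G}_{k}^{\perp}=\bbP_k(K)$) imply that $\bsigma_*-\bsigma_{*,I}$ is $L^2$-orthogonal to piecewise $\bbP_k$ tensors, so the terms involving $(\bsigma_{*,I}-\bsigma_*,\cdot)$ tested against elements of $\bbK_h$ vanish, and the corresponding coefficient of the stress equation reduces via $(\bsigma_{*,I}-\bsigma_*,\btau_{*,h})=(\bsigma_{*,I}-\bsigma_*,\btau_{*,h}-\bbPi_k^0\btau_{*,h})$ and Cauchy--Schwarz to a residual of order $h^{k+1}$. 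After these simplifications the discrete error system reads
\begin{align*}
(\be_{\sigma_0},\btau_{0,h})_{0,\Omega}+(\bdiv\btau_{0,h},\be_u)_{0,\Omega}+(\btau_{0,h},\be_r)_{0,\Omega} &= R_1(\btau_{0,h}), \\
(\be_{\sigma_1},\btau_{1,h})_{0,\Omega}+(\bdiv\btau_{1,h},\be_u)_{0,\Omega}+(\btau_{1,h},\be_r)_{0,\Omega} &= R_2(\btau_{1,h}), \\
(\bdiv(\be_{\sigma_0}+\be_{\sigma_1}),\bw_h)_{0,\Omega} &= 0, \\
(\be_{\sigma_0}+\be_{\sigma_1},\eeta_h)_{0,\Omega} &= 0,
\end{align*}
with $|R_i(\btau_{i,h})|\lesssim h^{k+1}(\|\bsigma_0,\bsigma_1,\bu,\br\|_{k+1,\Omega})\|\btau_{i,h}\|_{\bdiv,\Omega}$.

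Testing the first two equations with $\btau_{0,h}=\be_{\sigma_0}$ and $\btau_{1,h}=\be_{\sigma_1}$ and adding, the cross terms $(\bdiv(\be_{\sigma_0}+\be_{\sigma_1}),\be_u)_{0,\Omega}$ and $(\be_{\sigma_0}+\be_{\sigma_1},\be_r)_{0,\Omega}$ vanish by the third and fourth equations, yielding $\|\be_{\sigma_0}\|_{0,\Omega}^2+\|\be_{\sigma_1}\|_{0,\Omega}^2 = R_1(\be_{\sigma_0})+R_2(\be_{\sigma_1})$, and Young's inequality gives the desired $O(h^{k+1})$ bound on the stress errors. With these in hand, the inf-sup condition \eqref{eq:infsup} applied to either of the first two equations controls $\|\be_u\|_{0,\Omega}+\|\be_r\|_{0,\Omega}$ in terms of $h^{k+1}$ and $\|\be_{\sigma_*}\|_{0,\Omega}$, closing the estimate. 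A final triangle inequality with the interpolation bounds proves the lemma.

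\textbf{Main obstacle.} The delicate point is verifying that the residuals $R_i$ are indeed $O(h^{k+1})$; this requires combining the polynomial-orthogonality of $\bsigma_*-\bbPi_k^i\bsigma_*$ (from the interior moments) with the boundedness of the polynomial projector $\bbPi_k^0$ on $\bbS_h$, and carefully handling the rotation term $(\btau_{0,h},\br_I-\br)$, which does not vanish by orthogonality (since $\btau_{0,h}$ is not polynomial) but can be estimated directly by Cauchy--Schwarz using the standard $L^2$ approximation of $\bPi_k^0$ on $\br$.
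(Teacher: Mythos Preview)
Your overall strategy is reasonable and differs from the paper's route (the paper first averages the two stress equations into a single-stress auxiliary problem, invokes \cite[Lemma~A.1]{zhang19} to obtain $\|\bu_\pi-\bu\|_{0,\Omega}+\|\br_\pi-\br\|_{0,\Omega}\lesssim h^{k+1}$ directly, and only afterwards bounds each $\|\bsigma_{i,\pi}-\bsigma_i\|$ using the already-controlled rotation error). However, your argument contains a genuine error: the identity $\mathcalbb{G}_{k-1}(K)\oplus\mathcalbb{G}_k^{\perp}(K)=\bbP_k(K)$ is false. Since $\mathcalbb{G}_{k-1}\subsetneq\mathcalbb{G}_k$ and $\mathcalbb{G}_k^{\perp}$ is the orthogonal complement of $\mathcalbb{G}_k$ (not of $\mathcalbb{G}_{k-1}$) in $\bbP_k$, the direct sum misses the nontrivial subspace $\mathcalbb{G}_k\ominus\mathcalbb{G}_{k-1}$ for every $k\ge 1$. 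Therefore $\bsigma_*-\bbPi_k^i\bsigma_*$ is \emph{not} $L^2$-orthogonal to all piecewise $\bbP_k$ tensors, and in particular your fourth discrete error equation $(\be_{\sigma_0}+\be_{\sigma_1},\eeta_h)_{0,\Omega}=0$ is unjustified: it carries the residual $(\bsigma_0+\bsigma_1-\bsigma_{0,I}-\bsigma_{1,I},\eeta_h)_{0,\Omega}$, which need not vanish for $\eeta_h\in\bbK_h$.

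The consequence is that, when you test with $\btau_{i,h}=\be_{\sigma_i}$, the rotation term $(\be_{\sigma_0}+\be_{\sigma_1},\be_r)_{0,\Omega}$ does not cancel, and the clean identity $\|\be_{\sigma_0}\|_{0,\Omega}^2+\|\be_{\sigma_1}\|_{0,\Omega}^2=R_1(\be_{\sigma_0})+R_2(\be_{\sigma_1})$ acquires an extra term of size $h^{k+1}\|\be_r\|_{0,\Omega}$, coupling the stress bound to the as-yet-unknown rotation error. Your plan can still be repaired: feed the inf-sup estimate $\|\be_u\|_{0,\Omega}+\|\be_r\|_{0,\Omega}\lesssim h^{k+1}+\|\be_{\sigma_0}\|_{0,\Omega}$ back into this inequality and absorb via Young's inequality to close the loop. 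The paper sidesteps this circularity precisely by securing the rotation bound \emph{first} (through the averaged auxiliary problem and the external lemma), which is why that lemma is the crux of its proof.
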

\begin{proof}
	Taking $\btau_{0,h}=\btau_{1,h}=\btau_h$ in the first and second equations of \eqref{eq:projection}, adding them, and defining $\bsigma=\frac{1}{2}(\bsigma_0+\bsigma_1)$ and $\bsigma_\pi=\frac{1}{2}(\bsigma_{0,\pi}+\bsigma_{1,\pi})$, we get the following system
\begin{align*}
({\bsigma}_{\pi}-\bsigma,\btau_{h})_{0,\Omega} + (\bdiv\btau_{h},\bu_\pi-\bu)_{0,\Omega} + (\btau_{h},\br_\pi-\br)_{0,\Omega} & = 0 & \forall \btau_{h} \in \bbS_h,\\
	(\bdiv(\bsigma_{\pi}-\bsigma),\bw_h)_{0,\Omega} & = 0 & \forall \bw_h \in \bV_h,\\
	({\bsigma}_{\pi}-{\bsigma}, \eeta_h)_{0,\Omega} & = 0 & \forall \eeta_h\in\bbK_h,
\end{align*}
	which clearly satisfies the conditions to apply \cite[Lemma A.1]{zhang19}, whence we get
	\begin{equation}\label{eq:ppe} \|\bu_\pi-\bu\|_{{0,\Omega}}+\|\br_\pi-\br\|_{{0,\Omega}}\le Ch^{k+1}\|\bsigma_0.\bsigma_1,\bu,\br\|_{k+1}.\end{equation}
	In order to bound $\|{\bsigma}_{i,\pi}-\bsigma_i\|$ for $i=0,1$, we use the triangle inequality $\|{\bsigma}_{i,\pi}-\bsigma_i\|\le\|{\bsigma}_{i,\pi}-\bbPi_k^i\bsigma_i\|+\|\bbPi_k^i\bsigma_i-\bsigma_i\|$.
	Norm of each row of $(\bbPi_k^i\bsigma_i-\bsigma_i)$  is bounded by the $\bH^1$-norm of that row according to \cite[Section 3.2]{beirao16}.
	Thus, we have \begin{equation}\label{eq:sie} \|\bbPi_k^i\bsigma_i-\bsigma_i\|\le Ch^{k+1}\|\bsigma_i\|_{{k+1,\Omega}}.\end{equation}
	In order to derive suitable bounds on $\|{\bsigma}_{i,\pi}-\bbPi_k^i\bsigma_i\|$, we write the first and second equations of \eqref{eq:projection} as follows
	$$(d_{\bsigma_i}+ \bbPi_k^i\bsigma_i-\bsigma_i,\btau_{i,h})_{0,\Omega} + (\bdiv\btau_{i,h},d_{\bu}+\bPi_k^0\bu-\bu)_{0,\Omega} + (\btau_{i,h},d_\br+\bPi_k^0\br-\br)_{0,\Omega}  = 0 \;\;\; \forall \btau_{i,h} \in \bbS_h,$$
	where $d_{\bsigma_i}={\bsigma}_{i,\pi}-\bbPi_k^i\bsigma_i$, $d_{\bu}=\bu_\pi-\bPi_k^0\bu$, $d_\br=\br_\pi-\bPi_k^0\br$.
	Taking $\btau_{i,h}=d_{\bsigma_i}$ in the above equation and adding for $i=0,1$, we readily obtain
	\begin{align*}
		&	\|d_{\bsigma_0}\|^2+\|d_{\bsigma_1}\|^2+(\bdiv(d_{\bsigma_0}+d_{\bsigma_1}),d_{\bu})_{0,\Omega}+(\bdiv(d_{\bsigma_0}+d_{\bsigma_1}),\bPi_k^0\bu-\bu)_{0,\Omega} \\
		& \qquad \qquad = -(\bbPi_k^i\bsigma_0-\bsigma_0,d_{\bsigma_0})_{0,\Omega}-(\bbPi_k^i\bsigma_1-\bsigma_1,d_{\bsigma_1})_{0,\Omega}-(d_{\bsigma_0}+d_{\bsigma_1},d_\br+\bPi_k^0\br-\br)_{0,\Omega}.
	\end{align*}
	The fourth term on the left-hand side in the equation above is zero by the definition of the polynomial projection and the fact that for $\btau\in\bbS_h$, $\bdiv\btau\in\bV_h$. In addition, the third term can be analysed as follows
	\begin{align*}
	&	(\bdiv(d_{\bsigma_0}+d_{\bsigma_1}),d_{\bu})_{0,\Omega}\\
	&\qquad =(\bdiv(\bsigma_{0,\pi}+\bsigma_{1,\pi}-\bsigma_0-\bsigma_1),d_{\bu})_{0,\Omega}+(\bdiv(\bsigma_{0}+\bsigma_{1}-\bbPi_k^i\bsigma_0-\bbPi_k^i\bsigma_1),d_{\bu})_{0,\Omega}.
	\end{align*}
In turn, the first term of above is zero by the third equation of \eqref{eq:projection}.
 By \eqref{eq:pp}, the second term is equal to $(\bdiv (\bsigma_{0}+\bsigma_{1})-\bPi_k^0 \bdiv (\bsigma_{0}+\bsigma_{1}),d_{\bu})_{0,\Omega}$,
 which is zero by definition of the projection operator, and $d_{\bu}$ being in the projected space. In all, we are left with the following relation
\begin{align*}
&	\|d_{\bsigma_0}\|_{0,\Omega}^2+\|d_{\bsigma_1}\|_{0,\Omega}^2	 = -(\bbPi_k^i\bsigma_0-\bsigma_0,d_{\bsigma_0})_{0,\Omega}-(\bbPi_k^i\bsigma_1-\bsigma_1,d_{\bsigma_1})_{0,\Omega}-(d_{\bsigma_0}+d_{\bsigma_1},d_\br+\bPi_k^0\br-\br)_{0,\Omega} \\
	&\quad  {\le  C \sqrt{\|d_{\bsigma_0}\|_{0,\Omega}^2+\|d_{\bsigma_1}\|_{0,\Omega}^2}\left(\|\bbPi_k^i\bsigma_0-\bsigma_0\|_{0,\Omega}+\|\bbPi_k^i\bsigma_1-\bsigma_1\|_{0,\Omega}+\|d_\br\|_{{0,\Omega}}+\|\bPi_k^0\br-\br\|_{{0,\Omega}}\right)}.
\end{align*}
Thus we have the following estimate
$$ \sqrt{\|d_{\bsigma_0}\|_{0,\Omega}^2+\|d_{\bsigma_1}\|_{0,\Omega}^2}\le C\left(\|\bbPi_k^i\bsigma_0-\bsigma_0\|_{0,\Omega}+\|\bbPi_k^i\bsigma_1-\bsigma_1\|_{0,\Omega}+\|\br_\pi-\br\|_{{0,\Omega}}+\|\bPi_k^0\br-\br\|_{{0,\Omega}}\right),$$
where we have also used the triangle inequality.
	Now the desired estimate can be obtained from this using \eqref{eq:sie}, \eqref{eq:ppe} and standard polynomial approximation theory (see, e.g., \cite{brenner}).
\end{proof}

\begin{remark}
We note that the operator $\widetilde{\Pi}$ defined by \eqref{eq:projection} commutes with differentiation in time, that is,
\[\frac{\mathrm{d}}{\mathrm{d}t} \widetilde{\Pi}(\bsigma_{0},\bsigma_{1},\bu,\br)=\widetilde{\Pi}(\dot{\bsigma_{0}},\dot{\bsigma_{1}},\dot{\bu},\dot{\br}).\]
The proof of this result is the same as in   \cite[Lemma 6.2]{zhang19}, and this property is used in the subsequent analysis.\end{remark}
	
As a notational convenience, for any symbol $\bzeta$ in $\{\bsigma_0,\bsigma_1,\bv,\br\}$ and its discrete approximation  $\bzeta_h$ in $\{\bsigma_{0,h},\bsigma_{1,h},\bv_h,\br_h\}$, we define the following error quantities
\[e_{\bzeta}\doteq \bzeta-\bzeta_h,\qquad e_{\bzeta}^p\doteq \bzeta-\bzeta_\pi, \qquad e_{\bzeta}^h\doteq \bzeta_\pi-\bzeta_h.\]

We now prove the main result of this section.
\begin{theorem}\label{ppmk}
	For $(\bsigma_{0},\bsigma_{1},\bu,\br) \in W^{1,1}(0,T;\amsmathbb{W}^{k+1,2}(\Omega) \times\amsmathbb{W}^{k+1,2}(\Omega)\times \mathbf{W}^{k+1,2}(\Omega)\times \amsmathbb{W}^{k+1,2}(\Omega))  $, the solution $(\bsigma_{0,h},\bsigma_{1,h},\bv_h,\br_h)$ of \eqref{eq:semi-discrete} satisfies the following estimate for all time $t$, where the constant $C$ is independent of $h$ or $t$:
	$$\|\bv_h-\bv\|_{{0,\Omega}}+\|\br_h-\br\|_{{0,\Omega}}+\|{\bsigma}_{0,h}-\bsigma_0\|_{{0,\Omega}}+\|{\bsigma}_{1,h}-\bsigma_1\|_{{0,\Omega}}\le Ch^{k+1}\|\bsigma_0,\bsigma_1,\bv\|_{W^{1,1}(0,{T};W^{k+1,2}(\Omega))}.$$
\end{theorem}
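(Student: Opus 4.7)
The plan is the standard ``projection plus discrete error'' splitting used in mixed FEM analyses of evolution equations, adapted here to accommodate the VEM consistency error. Writing $e_\zeta = e^p_\zeta + e^h_\zeta$ for every variable $\zeta\in\{\bsigma_0,\bsigma_1,\bv,\br\}$, and invoking the commutativity of $\widetilde{\Pi}$ with $\partial_t$ together with Lemma \ref{ppel}, the projection errors $\|e^p_\zeta\|_{0,\Omega}$ and $\|\dot e^p_\zeta\|_{0,\Omega}$ are already controlled by $h^{k+1}$ times the appropriate $W^{k+1,2}$-norms (respectively $L^1_t W^{k+1,2}$-norms). A triangle inequality then reduces the theorem to a bound on the discrete error $e^h_\zeta = \zeta_\pi - \zeta_h$, which I assume initially to vanish by choosing $\zeta_h(0) = \zeta_\pi(0)$.

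To obtain such a bound I would subtract the semi-discrete system \eqref{eq:semi-discrete} from the continuous weak formulation \eqref{eq:weak-viscoelast} and use the defining relations \eqref{eq:projection} to rewrite everything in terms of $e^h_\zeta$. The projection was crafted precisely so that the cross terms
\[(\bdiv\btau_h,e^p_\bv)_{0,\Omega},\;(\btau_h,e^p_\br)_{0,\Omega},\;(\bdiv(e^p_{\bsigma_0}+e^p_{\bsigma_1}),\bw_h)_{0,\Omega},\;(e^p_{\bsigma_0}+e^p_{\bsigma_1},\eeta_h)_{0,\Omega}\]
all vanish identically. What survives are (i) $L^2$-inner products of $\dot e^p_{\bsigma_0}$ and $\dot e^p_{\bsigma_1}$ with the stress test functions, coming from the mass terms weighted by $\cA_*$ and $\varrho$, and (ii) a VEM consistency defect of the form $a_{*,h}(\dot{\bsigma}_{*,\pi},\btau_h) - (\cA_*\dot{\bsigma}_{*,\pi},\btau_h)_{0,\Omega}$, which is absent in a purely conforming setting.

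The next step is to take $\btau_{0,h} = e^h_{\bsigma_0}$, $\btau_{1,h} = e^h_{\bsigma_1}$, $\bw_h = e^h_\bv$ and $\eeta_h = e^h_\br$ and add. The divergence and rotation couplings cancel exactly as in the derivation of \eqref{eq:gtks}, so that the coercivity of $a_{*,h}$ and $a_{0,h}'$ leaves a differential inequality of the form
\[\tfrac{1}{2}\tfrac{d}{dt}\bigl(\|e^h_{\bsigma_0}\|^2_{0,\Omega}+\|e^h_{\bsigma_1}\|^2_{0,\Omega}+\|e^h_\bv\|^2_{0,\Omega}\bigr)+C\|e^h_{\bsigma_0}\|^2_{0,\Omega}\le \mathcal{R}(t),\]
where $\mathcal{R}(t)$ gathers the data-side terms of types (i) and (ii) paired against $e^h_{\bsigma_0}$, $e^h_{\bsigma_1}$ and $e^h_\bv$.

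The main technical obstacle is the consistency defect, since $\widetilde{\Pi}$ is built from the plain $L^2$ inner product while the scheme uses the $\cA_*$-weighted form $a_{*,h}$. I would treat this term by inserting $\pm\bbPi_k^0\dot{\bsigma}_*$ and exploiting that $a_{*,h}$ agrees with $(\cA_*\cdot,\cdot)_{0,\Omega}$ on polynomials of degree at most $k$, together with the scaling \eqref{newdof} of the stabilisation $S_h^K$ applied to $\dot{\bsigma}_{*,\pi}-\bbPi_k^0\dot{\bsigma}_*$. Standard polynomial approximation estimates, combined with the projection bound of Lemma \ref{ppel}, then give an $O(h^{k+1})$ bound on $\mathcal{R}(t)$ in terms of the required Sobolev norms of $\bsigma_0,\bsigma_1,\bv$. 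After Cauchy--Schwarz and Young's inequality, Gronwall's lemma applied to the integrated form of the differential inequality yields $\max_{t\in[0,T]}\|e^h_\zeta(t)\|_{0,\Omega}\lesssim h^{k+1}\|(\bsigma_0,\bsigma_1,\bv)\|_{W^{1,1}(0,T;W^{k+1,2}(\Omega))}$, and combining with Lemma \ref{ppel} via the triangle inequality completes the proof.
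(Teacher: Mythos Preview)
Your approach is correct and essentially coincides with the paper's: subtract \eqref{eq:semi-discrete} from \eqref{eq:weak-viscoelast}, use the projection \eqref{eq:projection} (and its commutativity with $\partial_t$) to eliminate the mixed coupling terms, test with the discrete errors $e^h_\zeta$, handle the VEM consistency defect $a_*(\cdot,\cdot)-a_{*,h}(\cdot,\cdot)$ by inserting $\bbPi_k^0$, and close with Lemma~\ref{ppel} and the triangle inequality. One minor remark: to land exactly on the $W^{1,1}(0,T;\cdot)$ norm stated in the theorem, the paper does not use Young's inequality followed by Gronwall (which would naturally give a $W^{1,2}$-in-time bound on the right-hand side) but instead divides the differential energy inequality by $\sqrt{\|e_{\bsigma_0}^h\|^2_{A_{0,h}}+\|e_{\bsigma_1}^h\|^2_{A_{1,h}}+\|e_\bv^h\|^2_\varrho}$ and integrates directly in time.
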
\begin{proof}
Subtracting the corresponding continuous and semi-discrete equations, we get the following system
with $\bv_\pi=\dot{\bu_\pi}$:
	\begin{subequations}\label{ppks}
	\begin{align}
	a_0(\dot{\bsigma}_0,\btau_{0,h})- a_{0,h}(\dot{\bsigma}_{0,h}, \btau_{0,h})+a_0'(\bsigma_0,\btau_{0,h})- a_{0,h}'({\bsigma}_{0,h}, \btau_{0,h}) & &\\
	+ (\bdiv\btau_{0,h},e_{\bv})_{0,\Omega} + (\btau_{0,h},\dot e_{\br})_{0,\Omega} & = 0 & \forall \btau_{0,h} \in \bbS_h,\\
	a_1(\dot{\bsigma}_1,\btau_{1,h})- a_{1,h}(\dot{\bsigma}_{1,h}, \btau_{1,h})+ (\bdiv\btau_{1,h},e_{\bv})_{0,\Omega} + (\btau_{1,h},\dot e_{\br})_{0,\Omega} & = 0 & \forall \btau_{1,h} \in \bbS_h,\\
	(\varrho \dot{e_{\bv}},\bw_h)_{0,\Omega} - (\bdiv(e_{\bsigma_{0}}+e_{\bsigma_{1}}),\bw_h)_{0,\Omega} & = 0 & \forall \bw_h \in \bV_h,\\
	(\dot e_{\bsigma_0}+\dot e_{\bsigma_1}, \eeta_h)_{0,\Omega} & = 0 & \forall \eeta_h\in\bbK_h.
\end{align}
	\end{subequations}

	Substituting $\btau_{0,h}=e_{\bsigma_0}^h$, $\btau_{1,h}=e_{\bsigma_1}^h$ and $\bw_h=e_{\bv}^h$ in the first, second and third equations respectively, and adding them, we get the following relation
	\begin{align*}
		& a_{0,h}(\dot e_{\bsigma_0}^h,e_{\bsigma_0}^h)+a_{0,h}'(e_{\bsigma_0}^h,e_{\bsigma_0}^h)+ a_{1,h}(\dot e_{\bsigma_1}^h,e_{\bsigma_1}^h)
		+(\varrho \dot e_{\bv}^h,e_{\bv}^h)_{0,\Omega}\\
		& = -(\varrho \dot e_{\bv}^p,e_{\bv}^h)_{0,\Omega}
		+(\dot e_{\bsigma_0}^p,e_{\bsigma_0}^h)_{0,\Omega}+(\dot e_{\bsigma_1}^p,e_{\bsigma_1}^h)_{0,\Omega} \\
		&\quad  -a_0'(\bsigma_0,e_{\bsigma_0}^h)+a_{0,h}'({\bsigma}_{0,\pi},e_{\bsigma_0}^h)
	-a_0(\dot \bsigma_0,e_{\bsigma_0}^h)+a_{0,h}(\dot {\bsigma}_{0,\pi},e_{\bsigma_0}^h)
	-a_1(\dot \bsigma_1,e_{\bsigma_1}^h)+a_{1,h}(\dot {\bsigma}_{1,\pi},e_{\bsigma_1}^h),
	\end{align*}
	where we have also considered  that $\bsigma_*,\bsigma_{*,h} \perp \bbK_h$. 	Now, using the definition of the discrete bilinear forms and {their symmetry and coercivity properties, we can define the norms
	\[ \|\btau\|_{A_{i,h}}:= \sqrt{a_{i,h}(\btau,\btau)}, \quad \|\btau\|_{A_{0,h}'}:= \sqrt{a_{0,h}'(\btau,\btau)}, \quad \text{as well as}\ \|\bw\|_\varrho:= \sqrt{\varrho}\|\bw\|_{0,\Omega} .\]
With this, we} proceed to rewrite the result
in its energy form, leading to
\begin{align*}
& \frac{1}{2}\frac{\text{d}}{\text{d}t}\left(\|e_{\bsigma_0}^h\|^2_{A_{0,h}}+\|e_{\bsigma_1}^h\|^2_{A_{1,h}}+\|e_{\bv}^h\|^2_{\varrho}\right)+\|e_{\bsigma_0}^h\|^2_{A_{0,h}'} \\
	 =&-(\varrho \dot e_{\bv}^p,e_{\bv}^h)_{0,\Omega}+(\dot e_{\bsigma_0}^p,e_{\bsigma_0}^h)_{0,\Omega}+(\dot e_{\bsigma_1}^p,e_{\bsigma_1}^h)_{0,\Omega}
	 -a_{0,h}'(e_{\bsigma_0}^p,e_{\bsigma_0}^h)-(a_0'({\bsigma}_{0},e_{\bsigma_0}^h)-a_{0,h}'({\bsigma}_{0},e_{\bsigma_0}^h)) \\
	& -a_{0,h}(\dot e_{\bsigma_0}^p,e_{\bsigma_0}^h)-(a_0(\dot {\bsigma}_{0},e_{\bsigma_0}^h)-a_{0,h}(\dot {\bsigma}_{0},e_{\bsigma_0}^h))
	 -a_{1,h}(\dot e_{\bsigma_1}^p,e_{\bsigma_1}^h)-(a_1(\dot {\bsigma}_{1},e_{\bsigma_1}^h)-a_{1,h}(\dot {\bsigma}_{1},e_{\bsigma_1}^h)).
\end{align*}
And we can apply Cauchy--Schwarz's inequality to each term on the right-hand side, to readily  get the following
\begin{align*}
& \frac{1}{2}\frac{\text{d}}{\text{d}t}\left(\|e_{\bsigma_0}^h\|^2_{A_{0,h}}+\|e_{\bsigma_1}^h\|^2_{A_{1,h}}+\|e_{\bv}^h\|^2_{\varrho}\right)+\|e_{\bsigma_0}^h\|^2_{A_{0,h}'} \\
	 \le & C ( \|\dot e_{\bv}^p\|_{{0,\Omega}} \|e_{\bv}^h\|_{{0,\Omega}}+\|\dot e_{\bsigma_0}^p\|\|e_{\bsigma_0}^h\|+\|\dot e_{\bsigma_1}^p\|\|e_{\bsigma_1}^h\|
	+ \|e_{\bsigma_0}^p\|\|e_{\bsigma_0}^h\|+\|\bbPi_k^0 {\bsigma}_{0}-{\bsigma}_{0}\|\|e_{\bsigma_0}^h\| \\
	&\quad + \|\dot e_{\bsigma_0}^p\|\|e_{\bsigma_0}^h\| +\|\bbPi_k^0 \dot {\bsigma}_{0}-\dot{\bsigma}_{0}\|\|e_{\bsigma_0}^h\|
	 +\|\dot e_{\bsigma_1}^p\|\|e_{\bsigma_1}^h\|+\|\bbPi_k^0 \dot {\bsigma}_{1}-\dot {\bsigma}_{1}\|\|e_{\bsigma_1}^h\|)
	.
\end{align*}
%
%
%
Next, applying algebraic relations to the norms on the right-hand side of the above inequality, it is not difficult to arrive at the following estimate
\begin{align*}
   &\frac{1}{2}\frac{\text{d}}{\text{d}t}\left(\|e_{\bsigma_0}^h\|^2_{A_{0,h}}+\|e_{\bsigma_1}^h\|^2_{A_{1,h}}+\|e_{\bv}^h\|^2_{\varrho}\right)+\|e_{\bsigma_0}^h\|^2_{A_{0,h}'}\\
	\le & C \|\dot e_{\bsigma_0}^p,e_{\bsigma_0}^p,\dot e_{\bsigma_1}^p,\dot e_{\bv}^p,\bbPi_k^0 \dot \bsigma_{0}-\dot \bsigma_{0},\bbPi_k^0 \bsigma_{0}-\bsigma_{0},\bbPi_k^0 \dot \bsigma_{1}-\dot \bsigma_{1}\|
	\sqrt{\|e_{\bsigma_0}^h\|^2_{A_{0,h}}+\|e_{\bsigma_1}^h\|^2_{A_{1,h}}+\|e_{\bv}^h\|^2_{\varrho}}.
\end{align*}
Dividing both sides by $\sqrt{\|e_{\bsigma_0}^h\|^2_{A_{0,h}}+\|e_{\bsigma_1}^h\|^2_{A_{1,h}}+\|e_{\bv}^h\|^2_{\varrho}}$ and integrating in time, we are left with
$$\sqrt{\|e_{\bsigma_0}^h\|^2_{A_{0,h}}\!\!+\|e_{\bsigma_1}^h\|^2_{A_{1,h}}\!\!+\|e_{\bv}^h\|^2_{\varrho}}
\le C\! \int_0^t\!\! \|      \dot e_{\bsigma_0}^p,e_{\bsigma_0}^p,\dot e_{\bsigma_1}^p,\dot e_{\bv}^p,\bbPi_k^0 \dot \bsigma_{0}-\dot \bsigma_{0},\bbPi_k^0 \bsigma_{0}-\bsigma_{0},\bbPi_k^0 \dot \bsigma_{1}-\dot \bsigma_{1}         \|\, \text{d}t.
$$
	Again, by the commutativity of $\widetilde{\Pi}$ with differentiation in time, we have, for any symbol $\bzeta$ in $\{\bsigma_0,\bsigma_1,\bv\}$, that $\dot e_{\bzeta}^p=e_{\dot {\bzeta}}^p$. Thus, from Lemma \ref{ppel} and standard polynomial approximation theory, it is clear that the right-hand side of this last inequality is  bounded by $Ch^{k+1}\|\bsigma_0,\bsigma_1,\bv\|_{W^{1,1}(0,{T};W^{k+1,2}(\Omega))}$.
	Again, an application of the triangle inequality and Lemma \ref{ppel} gives us the desired estimate.
\end{proof}

Note that by differentiating equations \eqref{ppks} in time multiple times, and repeating the steps in the proof above, we can prove that the following holds for all time $t$ and for all $l\in\{0,1,...\}$, with $\mathcal D^l$ being the $l$-times differentiation operator
	\begin{equation}\label{gvks}
		\begin{split}
		\|\mathcal D^l\bv_h-\mathcal D^l\bv\|_{{0,\Omega}}+\|\mathcal D^l\br_h-\mathcal D^l\br\|_{{0,\Omega}}+\|{\mathcal D^l\bsigma}_{0,h}-\mathcal D^l\bsigma_0\|_{{0,\Omega}}+\|{\mathcal D^l\bsigma}_{1,h}-\mathcal D^l\bsigma_1\|_{{0,\Omega}}\\
		\le Ch^{k+1}\|\bsigma_0,\bsigma_1,\bv\|_{W^{l+1,1}(0,{T};W^{k+1,2}(\Omega))}.
		\end{split}
	\end{equation}

We also prove a result about error in divergence of the total viscoelastic stress.
\begin{theorem}\label{th:div}
	The semi-discrete solution satisfies the following error bound for all time $t\in[0,{T}]$:
	\begin{equation}\label{skvt}
		\|\bdiv(\bsigma_{0,h}+\bsigma_{1,h}-\bsigma_0-\bsigma_1)\|_{{0,\Omega}}\le Ch^{k+1}\|\bsigma_0,\bsigma_1,\bv\|_{W^{2,1}(0,{T};L^2(\Omega))}.
	\end{equation}
\end{theorem}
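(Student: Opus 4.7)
The plan is to split $\bdiv(\bsigma_{0,h}+\bsigma_{1,h}-\bsigma_0-\bsigma_1)$ into a part that lies entirely inside the discrete velocity space $\bV_h$ and a pure polynomial-approximation residual orthogonal to $\bV_h$, and then to estimate each piece separately. The key enabling facts are: (i) the virtual space definition \eqref{eq:local-space} forces $\bdiv\btau_h|_K\in\PP_k(K)$ for every $\btau_h\in\bbS_h$, so $\bdiv(\bsigma_{0,h}+\bsigma_{1,h})\in\bV_h$; (ii) the third line of the error system \eqref{ppks} provides a testable identity linking the divergence error directly to $\dot{e}_{\bv}$; and (iii) the time-differentiated error bound \eqref{gvks} (with $l=1$) already provides $\mathcal{O}(h^{k+1})$ control on $\|\dot{e}_{\bv}\|_{0,\Omega}$.

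Concretely, I would write $\bdiv(\bsigma_{0,h}+\bsigma_{1,h}-\bsigma_0-\bsigma_1)=\bzeta_h+\bxi$, where $\bzeta_h:=\bdiv(\bsigma_{0,h}+\bsigma_{1,h})-\bPi_k^0\bdiv(\bsigma_0+\bsigma_1)\in\bV_h$ and $\bxi:=\bPi_k^0\bdiv(\bsigma_0+\bsigma_1)-\bdiv(\bsigma_0+\bsigma_1)$ is the $\bL^2$-projection residual. For $\bxi$, standard polynomial approximation on star-shaped polygons, combined with the continuous balance $\bdiv(\bsigma_0+\bsigma_1)=\varrho(\dot{\bv}-\ff)$ to recast the regularity requirement on $\dot{\bv}$ (and on the data $\ff$), gives $\|\bxi\|_{0,\Omega}\lesssim h^{k+1}$. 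For $\bzeta_h$ I would test the third equation of \eqref{ppks} against $\bw_h=\bzeta_h\in\bV_h$ to obtain
\[
(\varrho\dot{e}_{\bv},\bzeta_h)_{0,\Omega}=(\bdiv(e_{\bsigma_0}+e_{\bsigma_1}),\bzeta_h)_{0,\Omega}=(\bxi+\bzeta_h,\bzeta_h)_{0,\Omega}=\|\bzeta_h\|_{0,\Omega}^2,
\]
since the cross term $(\bxi,\bzeta_h)_{0,\Omega}$ vanishes by the defining $\bL^2$-orthogonality of $\bPi_k^0$ onto $\bV_h$. Hence $\|\bzeta_h\|_{0,\Omega}\le\varrho\,\|\dot{e}_{\bv}\|_{0,\Omega}$, and invoking \eqref{gvks} at $l=1$ bounds this further by $Ch^{k+1}\|\bsigma_0,\bsigma_1,\bv\|_{W^{2,1}(0,T;W^{k+1,2}(\Omega))}$. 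The triangle inequality then yields the target estimate \eqref{skvt}.

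I do not expect any serious technical obstacle: no new inf-sup argument, Gr\"onwall loop, or symmetry juggling is required, since the weak-symmetry constraints play no role in estimating a scalar-valued divergence residual. The one piece of bookkeeping worth noting is that the right-hand-side norm in the statement should, strictly speaking, involve $W^{2,1}(0,T;W^{k+1,2}(\Omega))$ rather than $W^{2,1}(0,T;L^2(\Omega))$, because both the $\bL^2$-projection residual bound for $\bxi$ and the higher-order estimate \eqref{gvks} require $W^{k+1,2}$ spatial regularity to deliver the optimal rate $h^{k+1}$. Beyond that minor notational clarification, the whole proof reduces to the orthogonality identity above and a single application of the already-established velocity-derivative error bound.
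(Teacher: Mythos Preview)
Your proposal is correct and follows essentially the same route as the paper's proof: split off the $\bL^2$-projection residual, test the third equation of \eqref{ppks} with the discrete part $\bzeta_h=\bdiv(\bsigma_{0,h}+\bsigma_{1,h})-\bPi_k^0\bdiv(\bsigma_0+\bsigma_1)$, use orthogonality to isolate $\|\bzeta_h\|_{0,\Omega}^2$, bound it by $\varrho\|\dot e_{\bv}\|_{0,\Omega}$, and finish with \eqref{gvks} at $l=1$ plus triangle inequality. The only slip is a sign: since $e_{\bsigma_i}=\bsigma_i-\bsigma_{i,h}$, one has $\bdiv(e_{\bsigma_0}+e_{\bsigma_1})=-(\bzeta_h+\bxi)$, so the identity should read $-(\varrho\dot e_{\bv},\bzeta_h)_{0,\Omega}=\|\bzeta_h\|_{0,\Omega}^2$, which of course leaves the estimate unchanged; your remark about the spatial regularity needed on the right-hand side is also well taken.
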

\begin{proof}
	Let $\bsigma=\bsigma_0+\bsigma_1$ and $\bsigma_h=\bsigma_{0,h}+\bsigma_{1,h}$. By the third equations of \eqref{eq:weak-viscoelast} and \eqref{eq:semi-discrete}, we get the following relation for all $\bw_h\in\bV_h$
	\begin{align*}
		(\varrho(\dot \bv-\dot\bv_h),\bw_h)=(\bdiv\bsigma-\bdiv\bsigma_h,\bw_h)
		=(\bPi_k^0\bdiv\bsigma-\bdiv\bsigma_h,\bw_h).
	\end{align*}
	Taking $\bw_h=\bPi_k^0\bdiv\bsigma-\bdiv\bsigma_h$, we can assert that
	$$\|\bPi_k^0\bdiv\bsigma-\bdiv\bsigma_h\|_{{0,\Omega}}^2=(\varrho(\dot\bv-\dot\bv_h),\bPi_k^0\bdiv\bsigma-\bdiv\bsigma_h)
	\le\|\varrho(\dot\bv-\dot\bv_h)\|_{{0,\Omega}}\|\bPi_k^0\bdiv\bsigma-\bdiv\bsigma_h\|_{{0,\Omega}}.$$
	Dividing both sides by $\|\bPi_k^0\bdiv\bsigma-\bdiv\bsigma_h\|_{{0,\Omega}}$, we readily obtain
	$$\|\bPi_k^0\bdiv\bsigma-\bdiv\bsigma_h\|_{{0,\Omega}}\le\|\varrho(\dot\bv-\dot\bv_h)\|_{{0,\Omega}}.$$
	Finally, owing to triangle inequality, polynomial approximation theory and \eqref{gvks}, we get \eqref{skvt}.
\end{proof}

\section{Fully discrete formulation}\label{kvbk}
We proceed to {discretise the time derivative by employing the absolutely stable Crank--Nicolson scheme. This yields a} fully discrete version of the semi-discrete scheme \eqref{eq:semi-discrete}.
For that purpose we choose a positive integer $N$  for the number of divisions of the time interval.
Let $t_n=n {\tau}$, where $n=0,\frac{1}{2},1,\frac{3}{2},2,\cdots N$.

Take $\vec{x}=(\bsigma_{0},\bsigma_{1},\bv,\br)^{\tt t}$, and following \eqref{dgt}, the semi-discrete system is represented as $\vec{x}_h=(\bsigma_{0,h},\bsigma_{1,h},\bv_h,\br_h)^{\tt t}$ satisfying $\mathscr{A}\dot{\vec{x}}_h=\mathscr{B}\vec{x}_h+\mathscr{C}$ for $t\in[0,{T}]$.
We use the notation $\vec{x}^n$ and ${\vec{x}}_h^n$ to denote ${\vec{x}}(t_n)$ and $\vec{x}_h(t_n)$ respectively.
	The fully discrete scheme is to find  $\psgs^n \simeq {\vec{x}}_h^n$ for all $n\in\{1,2,...,N\}$ such that,
	\begin{equation}\label{pst}
		\mathscr{A}\frac{\psgs^n-\psgs^{n-1}}{\tau}=\mathscr{B}\frac{\psgs^n+\psgs^{n-1}}{2}+\mathscr{C}\left(\left(n-\frac{1}{2}\right){\tau}\right).
	\end{equation}
	Now, we state and prove the result concerning the time-discretisation error.
\begin{lemma}
	The solution of the fully discrete problem satisfies the following error bound, where the constant $c$ is independent of $h$ or ${\tau}$: 
	\begin{equation}\label{kvbtk}
	\max_{n\in\{0,1,...,N\}} \|\psgs^n-{\vec{x}}_h^n\|_{{0,\Omega}} \le c{\tau}^2
		\|\bsigma_0,\bsigma_1,\bv,\br\|_{W^{3,\infty}(0,{T};L^2(\Omega))}.
\end{equation}
\end{lemma}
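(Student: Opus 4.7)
The plan is to perform a standard truncation-plus-energy analysis for the Crank--Nicolson scheme \eqref{pst} applied to the abstract ODE system \eqref{dgt}, exploiting the coercivity of the bilinear forms and the block structure of $\mathscr{A}$ and $\mathscr{B}$ in the same spirit as in the proof of semi-discrete stability.

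I would first derive the consistency error. Evaluating the semi-discrete identity $\mathscr{A}\dot{\vec{x}}_h=\mathscr{B}\vec{x}_h+\mathscr{C}$ at the midpoint $t_{n-1/2}$ and applying Taylor's formula with integral remainder around this point gives
\begin{align*}
\frac{\vec{x}_h^n-\vec{x}_h^{n-1}}{\tau} &= \dot{\vec{x}}_h(t_{n-1/2}) + \tau^2\,\vec{\rho}_1^n,\\
\frac{\vec{x}_h^n+\vec{x}_h^{n-1}}{2} &= \vec{x}_h(t_{n-1/2}) + \tau^2\,\vec{\rho}_2^n,
\end{align*}
where $\vec{\rho}_1^n$ and $\vec{\rho}_2^n$ are bounded in $\bL^2(\Omega)$ by $c\|\vec{x}_h\|_{W^{3,\infty}(t_{n-1},t_n;L^2(\Omega))}$, and hence via the higher-order semi-discrete estimate \eqref{gvks} by $c\|\bsigma_0,\bsigma_1,\bv,\br\|_{W^{3,\infty}(0,T;L^2(\Omega))}$. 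Subtracting \eqref{pst} from the midpoint identity and setting $\vec{e}^n:=\vec{x}_h^n-\psgs^n$ produces the error equation
\[
\mathscr{A}\,\frac{\vec{e}^n-\vec{e}^{n-1}}{\tau} \;=\; \mathscr{B}\,\frac{\vec{e}^n+\vec{e}^{n-1}}{2} + \tau^2\,\vec{\eta}^n,
\]
with $\|\vec{\eta}^n\|_{0,\Omega}$ controlled by the same $W^{3,\infty}(0,T;L^2(\Omega))$-norm.

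Next I would test the four components of this error equation against the four components of $\tfrac{1}{2}(\vec{e}^n+\vec{e}^{n-1})$ and sum, mimicking the manipulation that led to \eqref{eq:gtks}. The $\bdiv$--velocity couplings between the first three rows cancel pairwise, and the stress--rotation couplings collapse to
\[
\bigl(\tfrac{1}{2}(\vec{e}_{\bsigma_0}^n+\vec{e}_{\bsigma_1}^n+\vec{e}_{\bsigma_0}^{n-1}+\vec{e}_{\bsigma_1}^{n-1}),\,\tfrac{1}{\tau}(\vec{e}_{\br}^n-\vec{e}_{\br}^{n-1})\bigr)_{0,\Omega},
\]
which vanishes identically: subtracting the fourth equation of \eqref{pst} from the time-integrated fourth equation of \eqref{eq:semi-discrete} yields $(\vec{e}_{\bsigma_0}^n+\vec{e}_{\bsigma_1}^n-\vec{e}_{\bsigma_0}^{n-1}-\vec{e}_{\bsigma_1}^{n-1},\eeta_h)_{0,\Omega}=0$ for every $\eeta_h\in\bbK_h$, and under matched initial data $\vec{e}^0=\cero$ this telescopes to $(\vec{e}_{\bsigma_0}^n+\vec{e}_{\bsigma_1}^n,\eeta_h)_{0,\Omega}=0$ for all $n$, while $(\vec{e}_{\br}^n-\vec{e}_{\br}^{n-1})/\tau\in\bbK_h$. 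What remains is the clean energy identity
\[
\tfrac{1}{2\tau}\bigl(\nnorm{\vec{e}^n}^2 - \nnorm{\vec{e}^{n-1}}^2\bigr) + a_{0,h}'\!\bigl(\tfrac{\vec{e}_{\bsigma_0}^n+\vec{e}_{\bsigma_0}^{n-1}}{2},\tfrac{\vec{e}_{\bsigma_0}^n+\vec{e}_{\bsigma_0}^{n-1}}{2}\bigr) = \tau^2\,\bigl(\vec{\eta}^n,\tfrac{\vec{e}^n+\vec{e}^{n-1}}{2}\bigr)_{0,\Omega},
\]
where $\nnorm{\vec{e}}^2:=\|\vec{e}_{\bsigma_0}\|_{A_{0,h}}^2+\|\vec{e}_{\bsigma_1}\|_{A_{1,h}}^2+\|\vec{e}_{\bv}\|_\varrho^2$.

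Finally, I would discard the non-negative dissipation term, apply Cauchy--Schwarz and Young's inequality with a $\tau$-weighted splitting on the right-hand side, multiply through by $\tau$, sum from $n=1$ to any $m\le N$, and invoke the discrete Gronwall lemma with $\vec{e}^0=\cero$; this yields $\max_m\nnorm{\vec{e}^m}\le c\tau^2\|\bsigma_0,\bsigma_1,\bv,\br\|_{W^{3,\infty}(0,T;L^2(\Omega))}$, equivalent to the announced bound by the uniform equivalence between $\nnorm{\cdot}$ and $\|\cdot\|_{0,\Omega}$ on the discrete spaces. The main technical point is the exact cancellation of the stress--rotation cross terms, which hinges on the preservation of the weak-symmetry constraint at every discrete step; the rest is a routine Taylor-plus-Gronwall argument.
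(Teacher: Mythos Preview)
Your energy argument up through $\max_m\nnorm{\vec{e}^m}\le c\tau^2$ is sound, and the route differs from the paper's: rather than exploiting the exact cancellation of the $\bdiv$--velocity and stress--rotation couplings, the paper treats $\mathscr{A}$ and $\mathscr{B}$ as abstract operators, asserts coercivity of $\mathscr{A}$ together with continuity of $\mathscr{A}$ and $\mathscr{B}$, multiplies \eqref{pmvkg} by $\tfrac12(\vec{\delta}^n+\vec{\delta}^{n-1})$, and applies the discrete Gronwall inequality directly to the full $L^2$-norm of $\vec{\delta}^n$.

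The genuine gap in your proposal is the final sentence. The energy norm you introduce, $\nnorm{\vec{e}}^2=\|\vec{e}_{\bsigma_0}\|_{A_{0,h}}^2+\|\vec{e}_{\bsigma_1}\|_{A_{1,h}}^2+\|\vec{e}_{\bv}\|_\varrho^2$, carries \emph{no} information about the rotation component: $\nnorm{(\cero,\cero,\cero,\eeta_h)}=0$ for every $\eeta_h\in\bbK_h$, so there is no uniform equivalence between $\nnorm{\cdot}$ and $\|\cdot\|_{0,\Omega}$ on $\bbS_h\times\bbS_h\times\bV_h\times\bbK_h$. Your estimate therefore controls $\vec{e}_{\bsigma_0}^n$, $\vec{e}_{\bsigma_1}^n$, $\vec{e}_{\bv}^n$ but leaves $\|\vec{e}_{\br}^n\|_{0,\Omega}$ completely unbounded, and that term is part of the left-hand side of \eqref{kvbtk}. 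To close the argument along your lines you need an additional step that recovers the rotation error---for instance, return to the first (or second) row of the error equation, sum telescopically in $n$ using $\vec{e}_{\br}^0=\cero$, and invoke the inf-sup condition \eqref{eq:infsup} together with the stress and velocity bounds you have already established.
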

\begin{proof}
Denoting $\psgs^n-\vec{x}_h^n$ by $\vdelta^n$, we readily obtain
	\begin{equation}\label{pmvkg}
\begin{aligned}
	&\mathscr{A}\frac{\vdelta^{n}-\vdelta^{n-1}}{\tau}-\mathscr{B}\frac{\vdelta^{n}+\vdelta^{n-1}}{2}\\
	&\quad =\mathscr{A}\left(\frac{\vec{x}_h^{n}-\vec{x}_h^{n-1}}{\tau}-\dot{\vec{x}}_h^{n-\frac{1}{2}}\right)-\mathscr{B}\left(\frac{\vec{x}_h^{n}+\vec{x}_h^{n-1}}{2} - \vec{x}_h^{n-\frac{1}{2}}\right).
\end{aligned}
	\end{equation}
Next, we use the following relation
	\begin{equation}\label{psnl}
\begin{aligned}
	 \left\lVert \frac{\vec{x}_h^{n}+\vec{x}_h^{n-1}}{2} - \vec{x}_h^{n-\frac{1}{2}} \right\rVert_{{0,\Omega}} &= \left\lVert \int_{t_{n-\frac{1}{2}}}^{t_n}\dot{\vec{x}}_h(s)\text{d}s- \int_{t_{n-1}}^{t_{n-\frac{1}{2}}}\dot{\vec{x}}_h(s)\text{d}s \right\rVert_{{0,\Omega}} \\
	&=\left\lVert \int_{t_{n-\frac{1}{2}}}^{t_n} \int_{t_{n-\frac{1}{2}}}^s \ddot{\vec{x}}_h(t)\text{d}t\,\text{d}s+ \int_{t_{n-1}}^{t_{n-\frac{1}{2}}}\int_{s}^{t_{n-\frac{1}{2}}}\ddot{\vec{x}}_h(t)\text{d}t\,\text{d}s \right\rVert_{{0,\Omega}} \\
	&\le {\tau}^2\pspr{\ddot{\vec{x}}_h}.
\end{aligned}
	\end{equation}
	And in order to bound the other term, we consider the following
	\begin{equation}\label{dsnl}
\begin{aligned}
	& \left\lVert \frac{\vec{x}_h^{n}-\vec{x}_h^{n-1}}{\tau}-\dot{\vec{x}}_h^{n-\frac{1}{2}} \right\rVert_{{0,\Omega}} \\
	& =\frac{1}{\tau}\left\lVert \int_{t_{n-\frac{1}{2}}}^{t_n}\int_{t_{n-\frac{1}{2}}}^{s}\int_{t_{n-\frac{1}{2}}}^{t}\dddot{\vec{x}}_h(r) \text{d}r\,\text{d}t\,\text{d}s
	+\int_{t_{n-1}}^{t_{n-\frac{1}{2}}}\int_s^{t_{n-\frac{1}{2}}}\int_t^{t_{n-\frac{1}{2}}}\dddot{\vec{x}}_h(r) \text{d}r\,\text{d}t\,\text{d}s
	\right\rVert_{{0,\Omega}}\\
	& \le {\tau}^2 \pspr{\dddot{\vec{x}}_h}.
\end{aligned}
	\end{equation}
	By the continuity of $\mathscr{A}$ and $\mathscr{B}$ (with continuity constants $c_{\mathscr{A}}$ and $c_{\mathscr{B}}$ respectively), the coercivity of $\mathscr{A}$ with constant $C_{\mathscr{A}}$, using \eqref{psnl} and \eqref{dsnl} in \eqref{pmvkg}, and then multiplying by $\frac{\vdelta^{n}+\vdelta^{n-1}}{2}$, we get the following estimate
\begin{align*}
	&\frac{\|\vdelta^{n}\|_{{0,\Omega}}^2-\|\vdelta^{n-1}\|_{{0,\Omega}}^2}{\tau}\\
	&\le c\left(\frac{\|\vdelta^{n}\|_{{0,\Omega}}^2+\|\vdelta^{n-1}\|_{{0,\Omega}}^2}{2}+{\tau}^2(\pspr{\ddot{\vec{x}}_h}+\pspr{\dddot{\vec{x}}_h})\|\vdelta^{n}+\vdelta^{n-1}\|_{{0,\Omega}}\right),
\end{align*}
where $c=C_{\mathscr{A}}^{-1}\max(1,c_{\mathscr{A}},c_{\mathscr{B}})$.
Applying the discrete form of {Gronwall's} inequality (see \cite{stpv}), we are left with
	\begin{equation*}
		\max_{n\in\{0,1,...,N\}} \|\vec{X}_h^{n}-\vec{x}_h^{n}\|_{{0,\Omega}} \le c{\tau}^2(\pspr{\ddot{\vec{x}}_h}+\pspr{\dddot{\vec{x}}_h}).
	\end{equation*}
	To complete the proof, we proceed to find $h$-independent bounds for the quantities $\pspr{\ddot{\vec{x}}_h}$ and $\pspr{\dddot{\vec{x}}_h}$,  which we derive as follows
	\begin{align*}
		\pspr{\mathcal D^l\vec{x}}=&\pspr{\mathcal D^l \bsigma_{0,h},\mathcal D^l \bsigma_{1,h},\mathcal D^l \bv_h,\mathcal D^l \br_h} \\
		\le & \pspr{\mathcal D^l \bsigma_{0},\mathcal D^l \bsigma_{1},\mathcal D^l \bv,\mathcal D^l \br} + C\bigl(\|\mathcal D^l\bv_h-\mathcal D^l\bv\|_{{0,\Omega}} \\
		& \qquad +\|\mathcal D^l\br_h-\mathcal D^l\br\|_{{0,\Omega}}+\|{\mathcal D^l\bsigma}_{0,h}-\mathcal D^l\bsigma_0\|_{{0,\Omega}}+\|{\mathcal D^l\bsigma}_{1,h}-\mathcal D^l\bsigma_1\|_{{0,\Omega}}\bigr)\\
		\le &  \pspr{\mathcal D^l \bsigma_{0},\mathcal D^l \bsigma_{1},\mathcal D^l \bv,\mathcal D^l \br} + Ch^{k+1}\|\bsigma_0,\bsigma_1,\bv\|_{W^{l+1,1}(0,{T};W^{k+1,2}(\Omega))}
	\end{align*}
{with  \eqref{gvks} in the last step}. Now, for a sufficiently small $h$, the second term is dominated by the first. Thus, we get the desired estimate.
\end{proof}

We finally state the full-discretisation error estimate which is obtained from \eqref{kvbtk}, Theorem \ref{ppmk}, and triangle inequality.
\begin{theorem}\label{th:cv}
{Assume that the solution to \eqref{eq:weak-viscoelast} is such that $\bsigma_0,\bsigma_1 \in W^{1,1}(0,T; \mathbb{W}^{k+1,2}(\Omega))\cap W^{3,\infty}(0,{T};\bbL^2(\Omega))$ and $\bv \in W^{1,1}(0,T; \mathbf{W}^{k+1,2}(\Omega))\cap W^{3,\infty}(0,{T};\bL^2(\Omega))$. Then,}	the solution $\vec{X}^n$ of \eqref{pst} satisfies the following estimate for all $n\in\{0,1,,...,N\}$
	\begin{align*} 
		\|\vec{X}^n-\vec{x}^n\|
		 \le  Ch^{k+1}\|\bsigma_0,\bsigma_1,\bv\|_{W^{1,1}(0,{T};W^{k+1,2}(\Omega))}+c{\tau}^2\|\bsigma_0,\bsigma_1,\bv,\br\|_{W^{3,\infty}(0,{T};L^2(\Omega))}.
	\end{align*}
\end{theorem}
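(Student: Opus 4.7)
The plan is a direct triangle inequality split combining the semi-discrete spatial error (Theorem \ref{ppmk}) with the time-discretisation error \eqref{kvbtk}. Writing $\vec{x}_h^n = \vec{x}_h(t_n)$ for the semi-discrete solution evaluated at the $n$-th time node, I would insert this intermediate state and decompose
\begin{equation*}
\|\vec{X}^n - \vec{x}^n\| \;\le\; \|\vec{X}^n - \vec{x}_h^n\| + \|\vec{x}_h^n - \vec{x}^n\|.
\end{equation*}
The first term is the purely temporal error between the fully-discrete Crank--Nicolson iterate and the continuous-in-time semi-discrete solution sampled at $t_n$; the second is the purely spatial error between the semi-discrete solution and the exact solution.

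For the temporal piece, I would invoke \eqref{kvbtk} uniformly over $n\in\{0,1,\ldots,N\}$, yielding the bound $c\tau^2 \|\bsigma_0,\bsigma_1,\bv,\br\|_{W^{3,\infty}(0,T;L^2(\Omega))}$; this is precisely where the hypothesis $\bsigma_0,\bsigma_1 \in W^{3,\infty}(0,T;\bbL^2(\Omega))$ and $\bv \in W^{3,\infty}(0,T;\bL^2(\Omega))$ is used (and implicitly, the proof of \eqref{kvbtk} already absorbed the $h$-independent bounds on $\ddot{\vec{x}}_h$ and $\dddot{\vec{x}}_h$ through the differentiated estimate \eqref{gvks}, so no additional work is needed). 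For the spatial piece, Theorem \ref{ppmk} gives the pointwise-in-time bound $Ch^{k+1}\|\bsigma_0,\bsigma_1,\bv\|_{W^{1,1}(0,T;W^{k+1,2}(\Omega))}$, valid for every $t\in[0,T]$ and in particular at $t=t_n$; this uses the hypothesis $\bsigma_0,\bsigma_1 \in W^{1,1}(0,T;\mathbb{W}^{k+1,2}(\Omega))$ and $\bv\in W^{1,1}(0,T;\mathbf{W}^{k+1,2}(\Omega))$.

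Adding the two bounds and taking the maximum over $n$ yields the stated estimate. There is essentially no hard step: both ingredients are already assembled, and the argument amounts to a bookkeeping check that the regularity hypotheses of Theorem \ref{th:cv} are precisely what is needed to apply each lemma. The only point worth a sentence of care is verifying that the constants are independent of both $h$ and $\tau$ simultaneously — this is immediate since the constant in \eqref{kvbtk} depends only on the coercivity/continuity constants of $\mathscr{A},\mathscr{B}$ (and not on $h$ once \eqref{gvks} is applied), while the constant in Theorem \ref{ppmk} is independent of $t$ and hence of $\tau$.
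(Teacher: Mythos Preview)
Your proposal is correct and follows exactly the paper's own argument: the paper simply states that the estimate is obtained from \eqref{kvbtk}, Theorem~\ref{ppmk}, and the triangle inequality, which is precisely the decomposition $\|\vec{X}^n-\vec{x}^n\|\le\|\vec{X}^n-\vec{x}_h^n\|+\|\vec{x}_h^n-\vec{x}^n\|$ you carry out.
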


\section{Numerical verification}\label{sec:results}

\subsection{Preliminaries}
In this section we show the results of simple computational tests that confirm the theoretical error estimates from Section~\ref{kvbk}, and we also showcase properties of the proposed mixed virtual element method. The implementation has been carried out using an in-house C++ code and following the guidelines in \cite{beirao16,berbatov21}. The discretisation leads, at each time step, to a system of algebraic equations and all linear solves are done with a direct method. We confine the results only to the   two-dimensional case, for which the total number of degrees of freedom are
\[ \texttt{DoFs}:= 2\cdot2(k+1) \cdot \sharp(\text{edges in }\cE_h) + (k+1)(5k+3)\cdot\sharp(\text{polygons in }\cT_h).  \]

\begin{figure}[t!]
  \centering
  \includegraphics[width=0.4\textwidth]{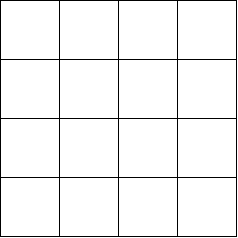}\qquad
  \includegraphics[width=0.4\textwidth]{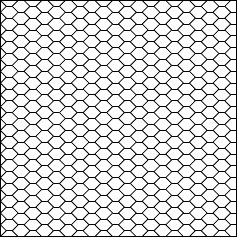}
\caption{Sample of coarse meshes of the two types (squares - left and hexagonal - right) employed for the convergence tests.}\label{fig:meshes}
\end{figure}

\subsection{Convergence tests}
First we investigate the convergence of the method using different smooth displacement solutions.
We consider the time horizon ${T} = 1$, the spatial domain $\Omega = (0,1)^2$, and the following model parameters
	that characterise the elastic and viscous stress components of an isotropic, homogeneous viscoelastic material $ \mu_0=3$, $\lambda_0=2$, $\mu_1=4$, $\lambda_1=5$, $\mu_0'=4$, $\lambda_0'=3$ and $\varrho=1$.  The units in this and the following examples are taken consistently with mm for length, mm/s for velocity, g/mm$^3$ for density, N/mm$^3$ for body forces, N/mm$^2$ for tractions, stress, Young modulus and Lam\'e parameters.
	
		We use manufactured smooth displacement solutions  from which we obtain the exact mixed variables and calculate the corresponding load function, initial conditions required to initialise the Crank--Nicolson scheme, as well as boundary conditions that satisfy \eqref{eq:strong-viscoelast}. The traction boundary $\Gamma^{\bsigma}$ is composed by the segments $x=0$ and $y=0$ while the remainder of the boundary is $\Gamma^{\bu}$.
		We consider partitions of the domain into a sequence of successively refined meshes of squares and hexagons (see Figure~\ref{fig:meshes}), and compute approximate solutions using the virtual element scheme for different polynomial degrees, as well as errors with respect to the manufactured smooth solution at the final time $t={T}$. The {Crank--Nicolson} time discretisation is done so as to achieve optimal convergence rate by having $\tau^2\propto h^{k+1}$.

 \begin{figure}[t!]
\centering
\textbf{$k=1$}\\
\subfigure[]{\includegraphics[width=0.325\textwidth]{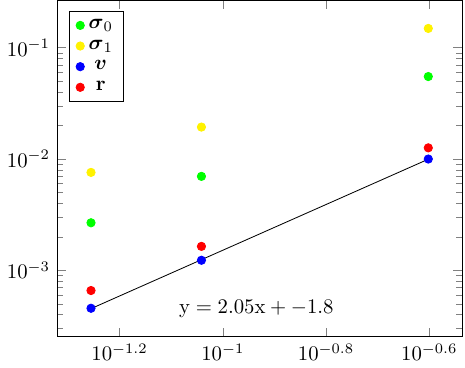}}
\subfigure[]{\includegraphics[width=0.332\textwidth]{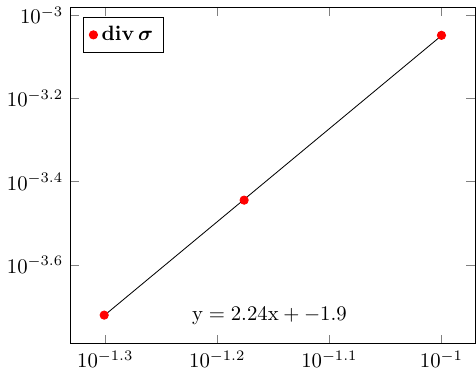}}\\
\subfigure[]{\includegraphics[width=0.325\textwidth]{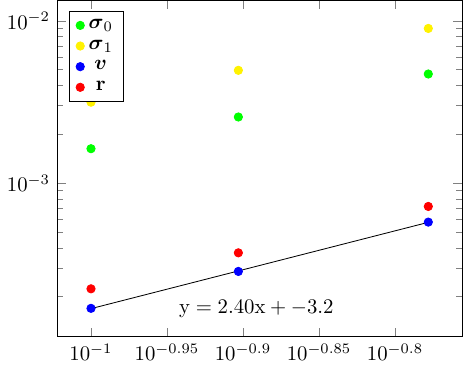}}
\subfigure[]{\includegraphics[width=0.325\textwidth]{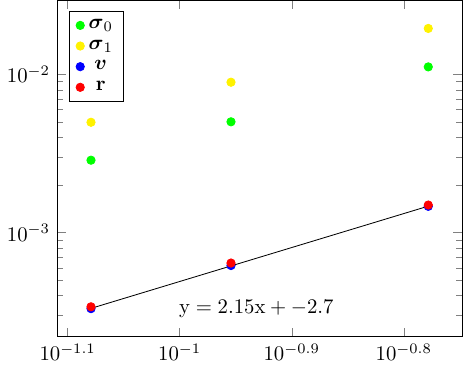}}\\
\vspace{1em}
\textbf{$k=2$}\\
\subfigure[]{\includegraphics[width=0.325\textwidth]{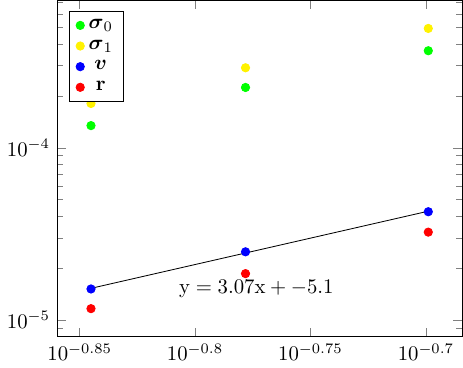}}
\subfigure[]{\includegraphics[width=0.325\textwidth]{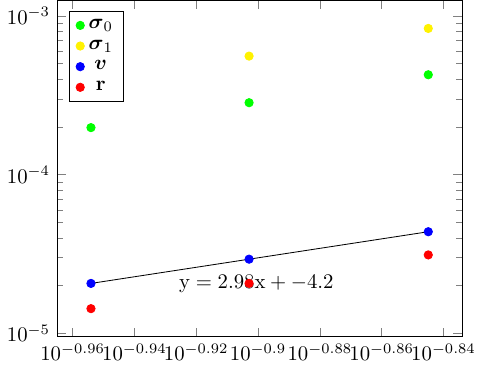}}\\
\vspace{1em}
\textbf{$k=3$}\\
\subfigure[]{\includegraphics[width=0.325\textwidth]{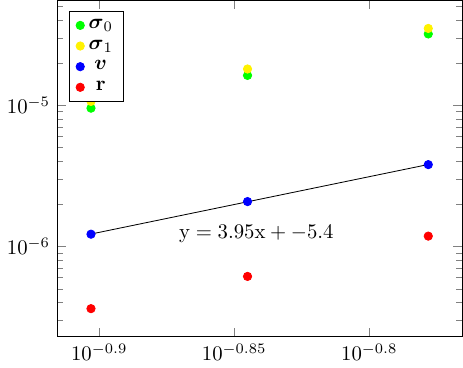}}
\subfigure[]{\includegraphics[width=0.325\textwidth]{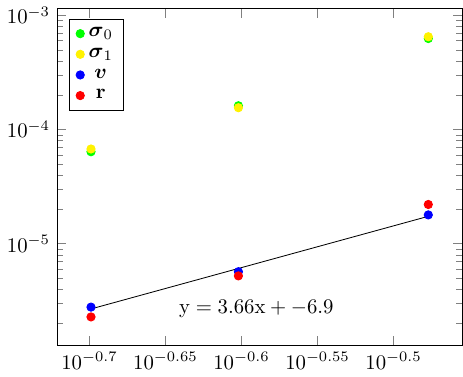}}
\caption{Error history for the system variables in the $L^2$-norm using the manufactured solution $\bu(x,y,t)=(e^{-y}\cos(t)\sin(x),e^{t+x})^{\tt t}$, with polynomial degree $k=1$ and on square meshes (a). Using the manufactured solution $\bu(x,y,t)=(t^2x(1-x)y(1-y),0)^{\tt t}$ with polynomial degree $k=1$ and on square meshes (b), on hexagonal meshes (c) on  square meshes (d), with degree $k=2$ on hexagonal meshes (e) with  $k=2$ on square meshes (f), with $k=3$ on square meshes (g), and with $k=3$ on hexagonal meshes (h). The solid lines indicate fitted convergence rate slopes.}
  \label{fig:vsc}
\end{figure}

\begin{table}[t!]
\centering
{\begin{tabular}{ccccccccc}
\hline\hline
$h$   &   $\mathrm{e}(\bsigma_1)$  &   $\texttt{rate}$   &   $\mathrm{e}(\bsigma_2)$  &   $\texttt{rate}$   &   $\mathrm{e}(\bv)$  &   $\texttt{rate}$   &   $\mathrm{e}(\br)$  &   $\texttt{rate}$   \\
\hline
  0.167 &   1.12e-02 &    $\star$    &   1.95e-02 &   $\star$     &   1.47e-03 &   $\star$     &   1.50e-03 &     $\star$    \\
  0.143 &   8.14e-03 &   2.05 &   1.47e-02 &   1.86 &   1.08e-03 &   2.00 &   1.09e-03 &   2.03 \\
  0.125 &   6.31e-03 &   1.90 &   1.13e-02 &   1.96 &   8.08e-04 &   2.15 &   8.29e-04 &   2.07 \\
  0.111 &   5.02e-03 &   1.94 &   8.93e-03 &   1.98 &   6.20e-04 &   2.25 &   6.43e-04 &   2.15 \\
  0.100 &   4.08e-03 &   1.96 &   7.24e-03 &   1.99 &   4.93e-04 &   2.17 &   5.10e-04 &   2.20 \\
  0.091 &   3.38e-03 &   1.99 &   5.98e-03 &   2.01 &   4.00e-04 &   2.21 &   4.14e-04 &   2.20 \\
  0.083 &   2.87e-03 &   1.87 &   4.99e-03 &   2.08 &   3.31e-04 &   2.16 &   3.40e-04 &   2.25 \\
\hline\hline
\end{tabular}}
\caption{{Error history (errors in the $L^2$ norm against meshsize) for the VEM applied to linear viscoelasticity.}}\label{table:convergence}
\end{table}

The manufactured displacements we choose in the different tests are $\bu(x,y,t)=(t^2xy(1-x)(1-y),0)^{\tt t}$, $\bu(x,y,t)=(e^{-y} \cos(t)\sin(x),e^{t+x})^{\tt t}$, and $\bu(x,y,t)=(t^3x(1-x)y(1-y),0)^{\tt t}$, for $(x,y)\in \Omega$ and $t \in [0,1]$. These functions have been selected to test the convergence in presence both homogenous and non-homogenous boundary conditions. We observe that the convergence order of the method seems to be robust with respect to the choice of the exact solution and polynomial degree. These results are collected in Figure~\ref{fig:vsc}. In all plots (except panel (b) which refers to the verification of convergence of the divergence of the total stress as stated in Theorem~\ref{th:div}) we display the decay of the $L^2$-norm of the individual error contributions (except that for the stress components, we compute $\|\bbPi_k^0\bsigma_{i,h}-\bsigma_i\|$ ), showing in all cases the expected optimal rate of convergence $O(h^{k+1})$ for all unknowns, consistently with the theoretical bounds from Theorem~\ref{th:cv}.  {We also tabulate errors and experimental convergence rates in Table~\ref{table:convergence}, confirming as well the optimal convergence for all unknowns. The rates of convergence in space are computed as \[{\tt rate}  =\log(e_{(\bullet)}/\tilde{e}_{(\bullet)})[\log(h/\tilde{h})]^{-1}, \] where $e,\tilde{e}$ denote errors generated on two consecutive  meshes of sizes $h$ and~$\tilde{h}$, respectively.} Other tests (now shown here) confirm that the method is also  robust with respect to other physical parameter values. Specifically, we followed \cite{gatica21} for tests for nearly incompressible materials. As an example, keeping all other settings in the same way as in the first test{, and using square meshes}, we perform two experiments with the parameters $ \mu_0=3$, $\lambda_0=1.5\times 10^2$, $\mu_1=9$, $\lambda_1=4.5\times 10^2$, $\mu_0'=3$, $\lambda_0'=1.5\times 10^2$ in the first additional experiment and  $ \mu_0=3$, $\lambda_0=1.5\times 10^4$, $\mu_1=9$, $\lambda_1=4.5\times 10^4$, $\mu_0'=3$, $\lambda_0'=1.5\times 10^4$ in the second (leading to Poisson ratios of 0.49 and 0.4999, respectively). The results are portrayed in Figure~\ref{fig:pnt}, {confirming a convergence rate of approximately $O(h^{2.5})$ and a slightly higher $O(h^{2.8})$ for the first and second case, respectively, that is, no degeneration of the error decay even in the nearly incompressible regime. In the later case the magnitude of the stress errors is much closer together, and in both cases the lowest contribution to the total error is given by the velocity.}

\begin{figure}[t!]
\centering
\subfigure[]{\includegraphics[height=0.375\textwidth]{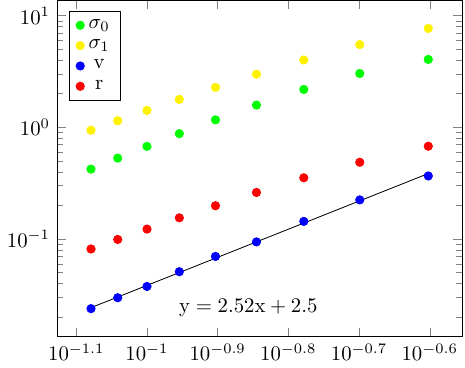}}\qquad
\subfigure[]{\includegraphics[height=0.375\textwidth]{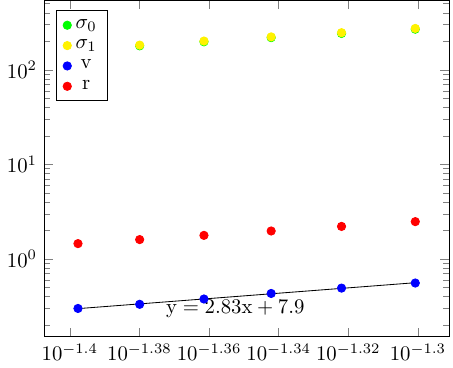}}
	\caption{Convergence plots for a nearly incompressible material with Poisson's ration $0.49$ (a) and $0.4999$ (b).}
	\label{fig:pnt}
	\end{figure}

\begin{figure}[t!]
  \centering
  \includegraphics[width=0.8\textwidth]{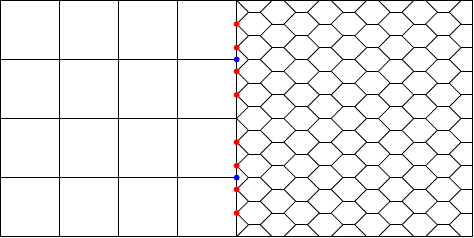}
	\caption{Example of the partitioned mesh-type. Hanging nodes for the left and right partitions are shown in red and blue colour, respectively.}\label{fig:partition}
\end{figure}


In another experiment, we test the convergence of the method when different discretisations are used for different portions of the domain. This introduces hanging nodes and arbitrarily small edges relative to the element diameter (see an example of hanging nodes in Figure~\ref{fig:partition}). The results for the case $k=1$ are shown in Figure~\ref{fig:mrm}, again indicating optimal convergence rates.

\begin{figure}[t!]
\centering
\includegraphics[width=0.4\textwidth]{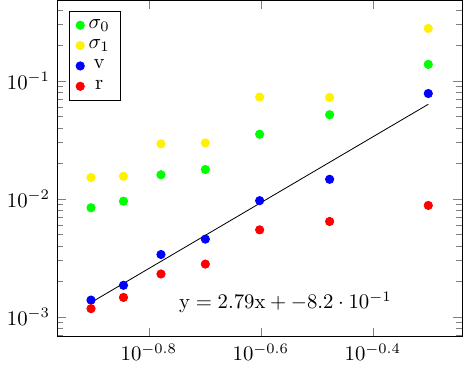}
	\caption{Convergence plot on  the square-hexagonal partitioned mesh using the manufactured displacement solution $\bu(x,y,t)=(t^3x(1-x)y(1-y),0)^{\tt t}$ and polynomial degree $k=1$.}
	\label{fig:mrm}
	\end{figure}
{	
\begin{remark}
We stress that the current  analysis (with the so-called \emph{dofi-dofi} stabiliser) is not valid with arbitrary small edges, and one requires to use, for example, the  following tangential edge stabilisation proposed in \cite{brenner18} for elliptic problems: 
\begin{equation*}
	 	S^K_{\partial} (\bu, \bv) := 
	 	\sum_{e\in \partial K}  h_{K} \int_{ \partial K} [ \bnabla \bu\, \bt_K^e ] \cdot  [ \bnabla \bv\, \bt_K^e ] .
	 	\end{equation*}
Optimal convergence rates can be established using tangential edge stabilisation by employing the advanced analytical techniques developed in \cite{daveiga17,brenner18} for elliptic problems. However, this requires a significantly more involved and sophisticated analysis, which is beyond the scope of the present work. We therefore leave this as a direction for future research.

To demonstrate the computational robustness of the proposed virtual element discretisation, we include numerical experiments involving meshes with arbitrarily small edges, as shown in Figure~\ref{fig:partition}. These experiments still achieve optimal convergence rates.
\end{remark}
}

\subsection{Qualitative tests}
As a demonstration of the viscoelastic behaviour, we plot the time-de\-pend\-ence of the velocity in $x$-direction of a marker placed at the centroid of the domain at $t=0$, while a constant uniform body-force $\ff(x,y)=(1,1)^{\tt t}$ is applied to the material. The displacement was fixed to zero on the entire boundary. We consider two cases. In one, the physical constants are same as in previous experiments except that $\varrho$ was $1000$. In another case, along with $\varrho=1000$, we choose reduced viscosity coefficients $\mu_0'=10^{-5}$ and $\lambda_0'=10^{-6}$. For both sets of simulations, we take ${T}=100$ and $\tau=0.1$.  We can see quickly damped oscillations in the first part of Figure \ref{fig:mssc}, which is expected due to higher viscosity, {while in the second part of the figure, the oscillations lose energy very slowly because of reduced viscosity coefficients.}

\begin{figure}[t!]
    \centering
    \includegraphics[height=0.4\textwidth]{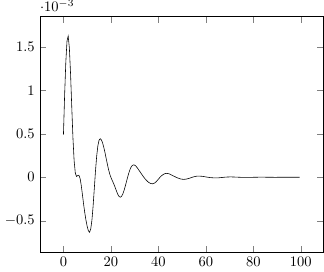}\qquad
\includegraphics[height=0.4\textwidth]{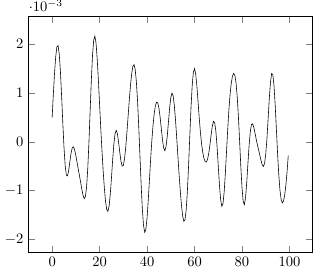}	
	\caption{Plots of the marker velocity [mm/s] {versus time [s]}, under constant uniform force [N/mm$^3$]. The second plot shows the result when $\mu_0'$ and $\lambda_0'$ were changed to $10^{-5}$ and $10^{-6}$ respectively.}\label{fig:mssc}
\end{figure}

\section{{Concluding remarks}}
{In this work we have addressed the construction and analysis of virtual element schemes for viscoelasticity with weakly imposed stress. We have shown the well-posedness of the problem and have established stability and convergence of the proposed discretisation. Some numerical tests were showcased to confirm the properties of the schemes. Future work includes the coupling with reaction-diffusion mechanisms in view of applications into mechano-chemical calcium dynamics in epithelial cell compounds, the analysis for arbitrarily small edges following \cite{brenner18}, the extension to stabilisation-free formulations, and also the design of strongly symmetric stress virtual element spaces following the appealing hybrid approaches from, e.g., \cite{visinoni24}.}

\bigskip
\noindent {\bf{Acknowledgements.}} The first author thanks  \textsc{Anusandhan National Research Foundation (ANRF)}, a statutory body of the Department of Science and Technology (DST), for supporting this work through the core research grant CRG/2021/002410. The third author   has been partially supported by  the Australian Research Council through the \textsc{Future Fellowship} grant FT220100496 and \textsc{Discovery Project} grant DP22010316.

\medskip
\noindent {\bf Conflict of Interest.} The authors declare that they have no conflict of interest.

\medskip

\noindent {\bf Data Availability Statement.} The data that support the findings of this study are available from the authors upon reasonable request.


\end{document}